\renewcommand{\subsection}{\@startsection
{subsection}{2}{0mm}{\baselineskip}{-0.25cm}
{\normalfont\normalsize\em}}
\def\negei{\mathbf e_i}
\def\negP{\mathbf P}
\def\negQ{\mathbf Q}
\def\negalpha{\text{\boldmath$\alpha$}}
\def\neglambda{\text{\boldmath$\lambda$}}
\def\neg1{\text{\boldmath$1$}}
\def\negbeta{\text{\boldmath$\beta$}}
\def\neggamma{\text{\boldmath$\gamma$}}
\def\negeta{\text{\boldmath$\eta$}}
\def\neggamma{\text{\boldmath$\gamma$}}
\def\negeta{\text{\boldmath$\eta$}}
\def\neg1{\text{\boldmath$1$}}
\def\hH{\widehat{H}}
\def\hS{\widehat{S}}
\def\hGamma{\widehat{\Gamma}}
\def\cC{\mathcal C}
\def\cL{\mathcal L}
\def\cX{\mathcal X}
\def\NN{\mathbb{N}}
\def\ZZ{\mathbb{Z}}
\def\FF{\mathbb{F}}
\def\Fq{{\mathbb{F}_q}}
\DeclareMathOperator{\divv}{div}
\DeclareMathOperator{\divvp}{div_\infty}
\DeclareMathOperator{\lub}{lub}
\newtheorem{theorem}{Theorem}[section]
\newtheorem{proposition}[theorem]{Proposition}
\newtheorem{corollary}[theorem]{Corollary}
\newtheorem{lemma}[theorem]{Lemma}
\theoremstyle{definition}
\newtheorem{definition}[theorem]{Definition}
\newtheorem{example}[theorem]{Example}
\theoremstyle{remark}
\title[Generalized Weierstrass semigroups and Riemann-Roch spaces]{Generalized Weierstrass semigroups and Riemann-Roch spaces for certain curves with separated variables}
\author[W. Ten\'orio \and G. Tizziotti]{Wanderson Ten\'orio \and Guilherme Tizziotti}
 \address{Universidade Federal de Uberl\^andia (UFU), Faculdade de Matem\' atica, Av.~J.~N. \' Avila~2121, 38408-902, Uberl\^andia, MG, Brazil}
  \email{dersonwt@yahoo.com.br, guilhermect@ufu.br}
\begin{document}

\keywords{generalized Weierstrass semigroups, generating sets, Riemann-Roch spaces}
\subjclass[2010]{Primary 14H55; Secondary 11G20}

\maketitle

\begin{abstract}
In this work we study the generalized Weierstrass semigroup $\widehat{H} (\mathbf{P}_m)$ at an $m$-tuple $\mathbf{P}_m = (P_{1}, \ldots , P_{m})$ of rational points on certain curves admitting a plane model of the form $f(y) = g(x)$ over $\mathbb{F}_{q}$, where ${f(T),g(T)\in \mathbb{F}_q[T]}$. In particular, we compute the generating set $\widehat{\Gamma}(\mathbf{P}_m)$ of $\widehat{H} (\mathbf{P}_m)$ and, as a consequence, we explicit a basis for Riemann-Roch spaces of divisors with support in $\{P_{1}, \ldots , P_{m}\}$ on these curves, generalizing results of Maharaj, Matthews, and Pirsic in \cite{maharaj}.
\end{abstract}

\section{Introduction}

Let $\mathcal{X}$ be a nonsingular, projective, geometrically irreducible algebraic curve of positive genus defined over a finite field $\mathbb{F}_q$ with $q$ elements and let $\mathbb{F}_q(\mathcal{X})$ be its associated function field. For $h \in \mathbb{F}_q(\mathcal{X})^\times$,  $\divv(h)$ and $\divvp(h)$ will stand respectively for the divisor and the pole divisor of $h$. We denote $\mathbb{N}_{0} = \mathbb{N} \cup \{0\}$, where $\mathbb{N}$ is the set of positive integers. Let $\mathbf{Q} = (Q_{1}, \ldots , Q_{m})$ be an $m$-tuple of pairwise distinct rational points on $\mathcal{X}$. The set
$$
H(\mathbf{Q}) := \left\{(a_{1}, \ldots, a_{m}) \in \mathbb{N}_{0}^ {m} \mbox{ : } \exists \ h \in \mathbb{F}_q(\mathcal{X})^\times \mbox{ with } \divvp(h) = \sum_{i=1}^ {m} a_{i}Q_{i} \right\}
$$
is a subsemigroup of $\NN_0^m$ with respect to addition, called the \textit{(classical) Weierstrass semigroup} of $\cX$ at the $m$-tuple $\mathbf{Q}$.

The case $m=1$---known as the Weierstrass semigroup of $\cX$ at $Q_1$---is a classical object in algebraic geometry which is related to many theoretical and applied topics; see e.g. \cite{centina}, \cite{GarciaKimLax}, \cite{vanlint}, \cite{homma} and \cite{stichtenoth2}. For $m\geq 2$, the firsts progresses in the understanding of these structures are due to Kim \cite{kim} and Homma \cite{homma2}, who studied several properties of Weierstrass semigroups at a pair of points. It allowed computing explicitly such semigroups at pairs for several curves as Hermitian, Suzuki, Norm-trace, and curves defined by Kummer extensions, which are of interest in coding theory; we refer the reader to \cite{gretchen}, \cite{gretchenSuz}, \cite{tizziotti} and \cite{masuda}. The general case was considered in several works as \cite{ballicokim}, \cite{carvalho2} and \cite{gretchen1}; see also \cite{carvalho} for a survey exposition on this topic. In particular, Matthews \cite{gretchen1} introduced a notion of generating set of $H(\mathbf{Q})$ that allows to constructing the semigroup from a finite number of elements. Matthews's approach was used to compute the generating set of classical Weierstrass semigroups at some collinear points on the Hermitian and Norm-trace curves in \cite{gretchen1} and \cite{gretchen2}, respectively. A general study of the generating set of classical Weierstrass semigroups at several points on certain curves of the form $f(y)=g(x)$, which include many classic curves over finite fields, was made in \cite{CT} by Castellanos and Tizziotti.

In \cite{delgado}, Delgado proposed a different generalization of Weierstrass semigroups at a single point to several points on a curve. His original approach was on curves over algebraically closed fields, but later Beelen and Tutas \cite{BR2} studied such objects for curves defined over finite fields, where they presented many properties of these semigroups. This generalization is as follows: given an $m$-tuple $\mathbf{Q}=(Q_1,\ldots,Q_m)$ of pairwise distinct rational points on $\cX$, we let $R_{\mathbf{Q}}$ denote the ring of functions of $\mathcal{X}$ having poles only on the set $\{Q_1,\ldots,Q_m\}$. The \emph{generalized Weierstrass semigroup} of $\cX$ at $\mathbf{Q}$ is the set
\begin{equation}\label{generalized WS}
 \widehat{H} (\mathbf{Q}):=\{(-v_{Q_1}(h),\ldots,-v_{Q_m}(h))\in \mathbb{Z}^m \ : \ h\in R_{\mathbf{Q}}\backslash\{0\} \},
 \end{equation}
where $v_{Q_i}$ stands for the valuation of the function field $\mathbb{F}_q(\mathcal{X})$ associated with $Q_i$. The classical and generalized Weierstrass semigroups are related by $ H(\mathbf{Q}) = \widehat{H}(\mathbf{Q}) \cap \NN_0^m$ provided that $q \geq m$;  see \cite[Proposition 6]{BR2}. 

Moyano-Fern\' andez, Ten\' orio, and Torres \cite{TT} studied the generalized Weierstrass semigroups at several points on a curve over a finite field. Motivated by the description of the classical Weierstrass semigroups at several points given by Matthews \cite{gretchen1}, they characterized the semigroups $\widehat{H} (\mathbf{Q})$ in terms of the \emph{absolute maximal elements} of $\widehat{H} (\mathbf{Q})$   (see Definition 2.2) introduced by Delgado \cite{delgado}, providing a generating set for $\widehat{H} (\mathbf{Q})$ in the sense of \cite{gretchen1}. Despite this generating set---formed by absolute maximal elements of $\widehat{H} (\mathbf{Q})$---is an infinite set, they showed it can be computed through a finite number of elements in $\widehat{H}(\mathbf{Q})$. They also proved that the absolute maximal elements of $\widehat{H}(\mathbf{Q})$ carry information on the corresponding Riemann-Roch spaces of every divisor with support contained in $\{Q_1,\ldots,Q_m\}$, and consequently that the arithmetic of these divisors can be deduced from the generating set of $\widehat{H} (\mathbf{Q})$.

The main focus of this paper is to study the generalized Weierstrass semigroup $\widehat{H} (\mathbf{P}_m)$ at an $m$-tuple $\mathbf{P}_m=(P_1,\ldots,P_m)$ of some rational points on certain curves with a plane model of the form $f(y) = g(x)$ over $\mathbb{F}_{q}$, denoted by $\cX_{f,g}$, where $f(T),g(T)\in \mathbb{F}_q[T]$. We establish a connection between the absolute maximal elements of generalized Weierstrass semigroups and the concept of \emph{discrepancy} (see Definition \ref{defi discrepancy}) presented by Duursma and Park in \cite{duursma}, which allows us to determine the generating set of $\widehat{H} (\mathbf{P}_m)$ for this type of curves, as in \cite{CT} for the classical approach. As a consequence, we describe a basis of Riemann-Roch spaces of certain divisors on this class of curves. We also introduce the notion of the supported floor of a divisor and explore its relationship with the generating set of generalized Weierstrass semigroups. 

This paper is organized as follows. Section 2 contains basic concepts and results on  generalized Weierstrass semigroups and discrepancies. In Section 3 we introduce the concept of the supported floor of a divisor and study the relations of this notion with the generalized Weierstrass semigroups. In Section 4 we study the generalized Weierstrass semigroup $\widehat{H} (\mathbf{P}_m)$ at several points on the curves $\mathcal{X}_{f,g}$. Finally, Section 5  is devoted to providing expressions for the dimension and basis of Riemann-Roch spaces of certain divisors on $\mathcal{X}_{f,g}$, as well as for the floor and the supported floor of such divisors.

\section{Background and Preliminary Results}

 Let $\mathcal{X}$ be a (nonsingular, projective, geometrically irreducible) curve of positive genus defined over $\mathbb{F}_q$. Let $\mbox{Div}(\mathcal{X})$ be the set of divisors on $\mathcal{X}$. For $G\in\mbox{Div}(\mathcal{X})$, let $\mathcal{L}(G):= \{ h \in \mathbb{F}_{q}(\mathcal{X})^\times \mbox{ : } \divv(h) + G \geq 0 \} \cup \{ 0 \}$ be the Riemann-Roch space associated to $G$. The dimension of $\mathcal{L}(G)$ as an $\mathbb{F}_{q}$-vector space will be denoted by $\ell(G)$.

Throughout this section $\mathbf{Q}$ will stand for the $m$-tuple $(Q_1, \ldots, Q_m)$ of  $m\geq 2$ distinct pairwise rational points on $\mathcal{X}$. For $\neggamma=(\gamma_1,\ldots,\gamma_m)\in \mathbb{Z}^m$,  $D_\neggamma$ will denote the divisor $ \gamma_1Q_1+\cdots+\gamma_mQ_m$ on $\cX$.

\subsection{Generalized Weierstrass semigroups} In what follows we briefly review some important facts concerning the generalized Weierstrass semigroup $ \widehat{H} (\mathbf{Q})$ defined in (\ref{generalized WS}). For ${\negalpha=(\alpha_1,\ldots,\alpha_m)\in \mathbb{Z}^m}$ and $i\in \{1,\ldots,m\}$, we let
$$\nabla_i^m(\negalpha):=\{\negbeta=(\beta_1,\ldots,\beta_m)\in \hH(\negQ) \ : \ \beta_i=\alpha_i \ \text{and }\beta_j\leq \alpha_j \ \text{for }j\neq i\}.$$
Such sets play an important role in the characterization of $\hH(\mathbf{Q})$ as follows.

\begin{proposition}[\cite{TT}, Proposition 2.1] \label{prop 2.1.3 da tese}
Let $\negalpha \in \mathbb{Z}^m$ and assume that $q \geq m$. Then,
\begin{enumerate}[\rm (1)]
\item $\negalpha \in \widehat{H} (\mathbf{Q})$ if and only if $\ell(D_{\negalpha}) = \ell(D_{\negalpha} - Q_i)+1$, for all $i \in \{1,\ldots,m\}$;
\item $\nabla_{i}^{m}(\negalpha) = \emptyset$ if and only if $\ell(D_{\negalpha})=\ell(D_{\negalpha} - Q_i)$.
\end{enumerate}
\end{proposition}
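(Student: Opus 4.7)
The two parts are tightly linked: I would prove (2) first, since it is essentially a translation between the semigroup language and the dimensions of Riemann-Roch spaces, and then bootstrap (1) from (2) using a covering-by-subspaces argument in which the hypothesis $q\geq m$ enters.

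For (2), the strategy is to build an explicit correspondence between elements of $\nabla_i^m(\negalpha)$ and functions in $\mathcal{L}(D_\negalpha)\setminus \mathcal{L}(D_\negalpha - Q_i)$. If $\negbeta\in \nabla_i^m(\negalpha)$, then by definition of $\widehat H(\negQ)$ there is $h\in R_{\negQ}\setminus\{0\}$ with $-v_{Q_j}(h)=\beta_j$ for all $j$; since $\beta_i=\alpha_i$ and $\beta_j\leq \alpha_j$ for $j\neq i$, one checks directly that $\divv(h)+D_\negalpha\geq 0$ (the poles of $h$ are concentrated on $\{Q_1,\ldots,Q_m\}$, which is why $h\in R_{\negQ}$ is crucial) while $v_{Q_i}(h)=-\alpha_i$ forbids $h\in \mathcal{L}(D_\negalpha-Q_i)$. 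Conversely, any $h\in \mathcal{L}(D_\negalpha)\setminus \mathcal{L}(D_\negalpha-Q_i)$ has $v_{Q_i}(h)=-\alpha_i$ and $v_{Q_j}(h)\geq -\alpha_j$ for $j\neq i$, and its poles sit inside $\{Q_1,\ldots,Q_m\}$, so $\negbeta:=(-v_{Q_1}(h),\ldots,-v_{Q_m}(h))$ lies in $\nabla_i^m(\negalpha)$. Combined with the standard fact that $\ell(D_\negalpha)-\ell(D_\negalpha-Q_i)\in\{0,1\}$, this yields the equivalence in (2).

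For (1), the forward implication is almost free: if $\negalpha\in \widehat H(\negQ)$, then $\negalpha$ itself lies in $\nabla_i^m(\negalpha)$ for every $i$, so by (2) each difference $\ell(D_\negalpha)-\ell(D_\negalpha-Q_i)$ equals $1$. The reverse implication is where the main work lies. Setting $V:=\mathcal{L}(D_\negalpha)$ and $V_i:=\mathcal{L}(D_\negalpha-Q_i)$, the hypothesis makes each $V_i$ a proper hyperplane of $V$, and I need an element $h\in V\setminus\bigcup_{i=1}^m V_i$: such an $h$ will satisfy $v_{Q_i}(h)=-\alpha_i$ for every $i$ and, since its poles all lie in $\{Q_1,\ldots,Q_m\}$, will belong to $R_{\negQ}$, proving $\negalpha\in \widehat H(\negQ)$.

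\textbf{Main obstacle.} The delicate point is the non-covering step: one must invoke the classical fact that an $\mathbb{F}_q$-vector space cannot be written as a union of at most $q$ proper subspaces. Since $q\geq m$ by assumption, the $m$ hyperplanes $V_1,\ldots,V_m$ cannot exhaust $V$, so an $h\in V\setminus\bigcup_i V_i$ exists. This is precisely where the hypothesis $q\geq m$ is used, and it explains why the statement is not purely formal. Once this covering lemma is in hand, the rest of the proof is a bookkeeping check that the function $h$ produced does land in $R_{\negQ}$ with the prescribed pole orders, which is immediate from $h\in \mathcal{L}(D_\negalpha)$ and $h\notin \mathcal{L}(D_\negalpha-Q_i)$ for all $i$.
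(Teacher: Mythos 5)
The paper does not prove this proposition---it is quoted from \cite{TT} (see also \cite{BR2})---and your argument is correct and is essentially the standard proof given there: part (2) via the bijective translation between $\nabla_i^m(\negalpha)$ and $\mathcal{L}(D_\negalpha)\setminus\mathcal{L}(D_\negalpha-Q_i)$, and part (1) via the fact that an $\mathbb{F}_q$-vector space is not the union of $q$ or fewer proper subspaces, which is exactly where $q\geq m$ enters. No gaps.
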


Let $\negalpha=(\alpha_1,\ldots,\alpha_m)\in \mathbb{Z}^m$. For a nonempty subset $J\subsetneq \{1,\ldots,m\}$, define
$$\nabla_J(\negalpha):=\{\negbeta=(\beta_1,\ldots,\beta_m)\in \hH(\negQ) \ : \ \beta_j=\alpha_j \ \text{for }j\in J, \ \text{and }\beta_i<\alpha_i  \ \text{for }i\not\in J\}.$$

\begin{definition} \label{abs maximal} An element $\negalpha\in \widehat{H} (\mathbf{Q})$ is called \textit{absolute maximal} if ${\nabla_J(\negalpha)=\emptyset}$ for every nonempty $J\subsetneq \{1,\ldots,m\}$. The set of absolute maximal elements in $\widehat{H} (\mathbf{Q})$ will be denoted by $\widehat{\Gamma}(\mathbf{Q})$.
\end{definition}

\begin{proposition} [\cite{TT}, Proposition 3.2] \label{absmax} Let $\negalpha\in \hH(\mathbf{Q})$ and assume that $q \geq m$. Then, the following statements are equivalent:
\begin{enumerate}[\rm (i)]
\item $\negalpha\in \hGamma(\mathbf{Q})$;
\item $\nabla_i^m(\negalpha)=\{\negalpha\}$ for all $i\in \{1,\ldots,m\}$;
\item  $\nabla_i^m(\negalpha)=\{\negalpha\}$ for some $i\in \{1,\ldots, m\}$;
\item $\ell(D_{\negalpha})=\ell(D_{\negalpha} - \sum_{i=1}^m Q_i)+1$.
\end{enumerate}
\end{proposition}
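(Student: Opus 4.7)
My plan is to close the cycle $(i) \Rightarrow (ii) \Rightarrow (iii) \Rightarrow (iv) \Rightarrow (i)$. The implication $(i) \Rightarrow (ii)$ is formal: fixing $i$, if $\negbeta \in \nabla_i^m(\negalpha)\setminus\{\negalpha\}$, the set $J := \{k : \beta_k = \alpha_k\}$ is a proper nonempty subset containing $i$, and $\negbeta$ automatically lies in $\nabla_J(\negalpha)$, contradicting (i). The implication $(ii) \Rightarrow (iii)$ is immediate.

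For $(iii) \Rightarrow (iv)$, assume $\nabla_{i_0}^m(\negalpha) = \{\negalpha\}$ and analyze the chain
\[
D_\negalpha \geq D_\negalpha - Q_{i_0} \geq D_\negalpha - Q_{i_0} - Q_{j_1} \geq \cdots \geq D_\negalpha - \sum_{k=1}^m Q_k.
\]
The first step drops dimension by one by Proposition \ref{prop 2.1.3 da tese}(1); by Proposition \ref{prop 2.1.3 da tese}(2), making every subsequent step a non-drop reduces to showing $\nabla_j^m\bigl(\negalpha - \nege_{i_0} - \sum_{k\in S}\nege_k\bigr) = \emptyset$ for each $j \neq i_0$ and each $S \subseteq \{1,\ldots,m\}\setminus\{i_0,j\}$. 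The case $S=\emptyset$ is the heart of the argument: given a hypothetical witness $\negbeta$, I would choose $h_\negalpha, h_\negbeta \in R_\negQ$ realizing the pole tuples $\negalpha, \negbeta$ and pick $c \in \Fqt$ so that the leading terms of $h_\negalpha$ and $c h_\negbeta$ at $Q_j$ cancel. The pole tuple of $h_\negalpha + c h_\negbeta$ then has $i_0$-th coordinate $\alpha_{i_0}$ (since $\beta_{i_0} < \alpha_{i_0}$ blocks cancellation at $Q_{i_0}$), $j$-th coordinate strictly below $\alpha_j$, and other coordinates at most $\alpha_k$, placing it in $\nabla_{i_0}^m(\negalpha)\setminus\{\negalpha\}$ against (iii). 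The general case reduces to this one because any element of the smaller $\nabla_j^m$ also belongs to $\nabla_j^m(\negalpha - \nege_{i_0})$.

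For $(iv) \Rightarrow (i)$, the hypothesis together with $\ell(D_\negalpha) = \ell(D_\negalpha - Q_i)+1$ forces $\cL(D_\negalpha - Q_i) = \cL\bigl(D_\negalpha - \sum_k Q_k\bigr)$ for every $i$. If $\negbeta \in \nabla_J(\negalpha)$ for some proper nonempty $J$, I fix $i \in J$ and realize $\negalpha, \negbeta$ by $h_\negalpha, h_\negbeta$; since both functions have pole order $\alpha_i$ at $Q_i$, some $\lambda \in \Fqt$ produces $h_\negalpha - \lambda h_\negbeta \in \cL(D_\negalpha - Q_i) = \cL\bigl(D_\negalpha - \sum_k Q_k\bigr)$. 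But any $k \notin J$ (which exists because $J$ is proper) satisfies $-v_{Q_k}(h_\negalpha) = \alpha_k > \beta_k = -v_{Q_k}(h_\negbeta)$, so no cancellation occurs at $Q_k$ and $-v_{Q_k}(h_\negalpha - \lambda h_\negbeta) = \alpha_k$, contradicting membership in $\cL\bigl(D_\negalpha - \sum_k Q_k\bigr)$.

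The main obstacle I anticipate is the iteration in $(iii) \Rightarrow (iv)$, because (iii) only constrains $\nabla_{i_0}^m(\negalpha)$ while the chain passes through divisors whose $\nabla_j^m$-sets are only implicitly controlled; the inclusion observation above is what lets a single base case propagate. The linear-combination trick that manufactures the forbidden element of $\nabla_{i_0}^m(\negalpha)$ is the decisive ingredient and uses nothing more than $\Fqt \neq \emptyset$; the hypothesis $q \geq m$ enters only through the invocations of Proposition \ref{prop 2.1.3 da tese}.
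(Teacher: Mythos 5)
Your argument is correct. Note, however, that the paper does not prove this proposition at all: it is imported verbatim from \cite{TT} (Proposition 3.2), so there is no in-paper proof to compare against. Your self-contained derivation is sound: the cycle $(i)\Rightarrow(ii)\Rightarrow(iii)\Rightarrow(iv)\Rightarrow(i)$ closes, the reduction of the general step in $(iii)\Rightarrow(iv)$ to the case $S=\emptyset$ via the inclusion $\nabla_j^m(\negalpha-\nege_{i_0}-\sum_{k\in S}\nege_k)\subseteq \nabla_j^m(\negalpha-\nege_{i_0})$ is valid, and the leading-coefficient cancellation at $Q_j$ (resp.\ $Q_i$) combined with the ultrametric inequality at the coordinates where the pole orders differ is exactly the standard mechanism used in \cite{TT} and \cite{BR2} for such equivalences. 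Two small points worth making explicit in a final write-up: the function $h_\negalpha+ch_\negbeta$ is nonzero because its pole tuple differs from that of neither summand at $Q_{i_0}$ (so it genuinely yields an element of $\hH(\negQ)$), and in $(iv)\Rightarrow(i)$ the equality $\cL(D_\negalpha-Q_i)=\cL(D_\negalpha-\sum_k Q_k)$ follows from the dimension count together with the containment of one space in the other. Your remark that $q\geq m$ enters only through Proposition \ref{prop 2.1.3 da tese} is accurate.
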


Given a finite subset $\mathcal{B}\subseteq \ZZ^m$,  we define the \emph{least upper bound} ($\lub$) of $\mathcal{B}$ by
$$\mbox{lub}(\mathcal{B}):=(\max\{\beta_1\mbox{ : } \negbeta \in \mathcal{B} \},\ldots,\max\{\beta_m\mbox{ : } \negbeta \in \mathcal{B} \})\in \ZZ^m.$$

\begin{theorem}[\cite{TT}, Theorem 3.4] Assume that $q \geq m$. The generalized Weierstrass semigroup of $\cX$ at $\mathbf{Q}$ can be written as
$$\hH(\mathbf{Q})=\{\lub(\{\negbeta^1,\ldots,\negbeta^m\}) \ : \ \negbeta^1,\ldots,\negbeta^m\in \hGamma(\mathbf{Q})\}.$$
\end{theorem}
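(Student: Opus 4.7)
The statement splits into the two inclusions $\supseteq$ and $\subseteq$, which I would handle separately.

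For $\supseteq$, the idea is that $\hH(\mathbf{Q})$ itself is closed under $\lub$ provided $q\geq m$. Given $\negbeta^1,\ldots,\negbeta^m\in\hGamma(\mathbf{Q})\subseteq\hH(\mathbf{Q})$, pick $h_k\in R_{\mathbf{Q}}\setminus\{0\}$ realizing $(-v_{Q_1}(h_k),\ldots,-v_{Q_m}(h_k))=\negbeta^k$ and combine them iteratively. At each pairwise step I replace two functions $h,h'$ realizing vectors $\negbeta,\negbeta'$ by $h+ch'$ for some $c\in\mathbb{F}_q$ chosen so that no leading-term cancellation occurs at any $Q_i$. Cancellation at $Q_i$ requires $-v_{Q_i}(h)=-v_{Q_i}(h')$, and in that case it excludes exactly one value of $c$; assuming $\negbeta\neq\negbeta'$ (otherwise the step is trivial) the number of $i$'s where $-v_{Q_i}(h)=-v_{Q_i}(h')$ is at most $m-1$, so at most $m-1$ values of $c$ are forbidden, and $q\geq m$ guarantees a valid choice. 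The final $H$ realizes $\lub\{\negbeta^1,\ldots,\negbeta^m\}$ as a pole vector.

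For $\subseteq$, given $\negalpha\in\hH(\mathbf{Q})$, I would produce, for each $i\in\{1,\ldots,m\}$, an element $\negbeta^i\in\hGamma(\mathbf{Q})\cap\nabla_i^m(\negalpha)$. The set $\nabla_i^m(\negalpha)$ is nonempty (it contains $\negalpha$) and in fact finite: every $\negbeta\in\nabla_i^m(\negalpha)$ has $\beta_i=\alpha_i$ and $\beta_j\leq\alpha_j$ for $j\neq i$, bounding the coordinates above, while $\negbeta\in\hH(\mathbf{Q})$ yields a function $h\in R_{\mathbf{Q}}$ with $-v_{Q_j}(h)=\beta_j$ for all $j$, hence $h\in\mathcal{L}(D_\negbeta)$, $\ell(D_\negbeta)\geq 1$ and $\deg(D_\negbeta)=\sum_j\beta_j\geq 0$; combined with the upper bounds, this forces a lower bound on every coordinate. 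Thus I may take $\negbeta^i$ to be a minimal element of $\nabla_i^m(\negalpha)$ under the componentwise partial order on $\mathbb{Z}^m$.

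Minimality then forces $\nabla_i^m(\negbeta^i)=\{\negbeta^i\}$: any $\neggamma\in\nabla_i^m(\negbeta^i)$ satisfies $\gamma_i=\alpha_i$ and $\gamma_j\leq\beta^i_j\leq\alpha_j$ for $j\neq i$, so $\neggamma\in\nabla_i^m(\negalpha)$ with $\neggamma\leq\negbeta^i$, forcing $\neggamma=\negbeta^i$. Proposition~\ref{absmax}\,(iii) then gives $\negbeta^i\in\hGamma(\mathbf{Q})$, and $\lub\{\negbeta^1,\ldots,\negbeta^m\}=\negalpha$ follows coordinatewise: the $k$-th coordinate of the $\lub$ is $\max_i\beta^i_k$, where the term $i=k$ attains $\beta^k_k=\alpha_k$ while every other term is $\leq\alpha_k$. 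The principal subtle point of the argument is precisely this bridge from ``minimal in $\nabla_i^m(\negalpha)$'' to the characterization $\nabla_i^m(\negbeta^i)=\{\negbeta^i\}$ demanded by Proposition~\ref{absmax}\,(iii); it works because $\nabla_i^m(\negbeta^i)$ sits inside $\nabla_i^m(\negalpha)$ below $\negbeta^i$. The remaining inputs---the elementary Riemann--Roch bound $\deg(D_\negbeta)\geq 0$ for $\negbeta\in\hH(\mathbf{Q})$ which secures finiteness of $\nabla_i^m(\negalpha)$, and the hypothesis $q\geq m$ used in the linear-combination argument in $\supseteq$---are mild but essential.
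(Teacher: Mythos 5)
The paper never proves this theorem: it is imported verbatim from \cite{TT} (Theorem 3.4), so there is no internal proof to compare against. Your $\subseteq$ half is correct and is essentially the standard argument: the lower bound on coordinates coming from $\deg(D_{\negbeta})\geq 0$ makes $\nabla_i^m(\negalpha)$ finite, a minimal element $\negbeta^i$ of $\nabla_i^m(\negalpha)$ satisfies $\nabla_i^m(\negbeta^i)=\{\negbeta^i\}$ because $\nabla_i^m(\negbeta^i)\subseteq\nabla_i^m(\negalpha)$ and sits below $\negbeta^i$, Proposition~\ref{absmax}(iii) then puts $\negbeta^i$ in $\hGamma(\negQ)$, and the coordinatewise identification of the lub with $\negalpha$ is immediate.

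The $\supseteq$ half has a gap in the counting. Besides the cancellation constraints, you must also exclude $c=0$ whenever $-v_{Q_i}(h)<-v_{Q_i}(h')$ for some $i$: there is no leading-term cancellation there, but $h+0\cdot h'=h$ fails to attain the maximum pole order at $Q_i$. The forbidden set therefore has size up to $(m-1)+1=m$, and $q\geq m$ no longer guarantees an admissible $c$; for instance with $q=m=2$, $\beta_1=\beta'_1$, $\beta_2<\beta'_2$ and equal leading coefficients at $Q_1$, both elements of $\FF_2$ are excluded. Two standard repairs: (a) first reduce to the case where $\negbeta$ and $\negbeta'$ are \emph{incomparable} (your reduction only discards $\negbeta=\negbeta'$; if they are comparable the larger one already realizes the lub and no combination is needed), after which at most $m-2$ coordinates agree and the forbidden set has size at most $(m-2)+1=m-1<q$; or (b) use a two-parameter combination $c_1h+c_2h'$ (or $\sum_k c_kh_k$ for all $m$ functions at once), where the forbidden locus is a union of at most $m$ hyperplanes through the origin of $\FF_q^n$, of cardinality at most $m(q^{n-1}-1)+1<q^n$ whenever $q\geq m$ and $n\geq 2$. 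Version (b) is exactly how the closedness of $\hH(\negQ)$ under least upper bounds is established in the Beelen--Tutas paper \cite{BR2}, which is what \cite{TT} relies on for this inclusion.
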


Note that the previous theorem shows that the set $\hGamma(\mathbf{Q})$ determines $\hH(\mathbf{Q})$ in terms of least upper bounds. In this way, $\hGamma(\mathbf{Q})$ can be seen as a \textit{generating set} of $\hH(\mathbf{Q})$ in the sense of \cite{gretchen1}. We observe that, unlike the case of generating set for classical Weierstrass semigroups of \cite{gretchen1}, the set $\hGamma(\mathbf{Q})$ is not finite. Nevertheless, it is finitely determined as follows.\\

For $i=2,\ldots,m$, let $a_i$ be the smallest positive integer $t$ such that ${tQ_{i}-tQ_{i-1}}$ is a principal divisor on $\cX$. We can thus define the region
$$\cC_m:=\{\negalpha=(\alpha_1, \ldots , \alpha_m)\in \ZZ^m \ : \ 0\leq \alpha_i< a_i \ \mbox{for } i=2,\ldots,m\}.$$ \label{teste}
Let $\negeta^i = (\eta_1^i, \ldots, \eta_m^i)\in \ZZ^m$ be the $m$-tuple whose $j$-th coordinate is
$$
\eta^i_j=\left \lbrace \begin{array}{rcl}
-a_i & , & \mbox{if } j=i-1\\
a_i & , & \mbox{if } j=i\\
0 & , & \mbox{otherwise.}\\
\end{array}\right.
$$
Defining ${\Theta_m:=\{ b_1 \negeta^1+ \ldots + b_{m-1} \negeta^{m-1}\in \ZZ^m \ : b_i \in \ZZ \ \mbox{for } i=1,\ldots,m-1\}}$, we can state the following concerning the absolute maximal elements in generalized Weierstrass semigroups at several points.

\begin{theorem}[\cite{TT}, Theorem 3.7]\label{maximals} Assume that $q \geq m$. The following holds
$$\hGamma(\mathbf{Q})=(\hGamma(\mathbf{Q})\cap \cC_m)+\Theta_m.$$
\end{theorem}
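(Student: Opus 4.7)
The plan is to prove the two inclusions separately. First, each $a_i$ for $i=2,\ldots,m$ is well-defined: since $Q_i-Q_{i-1}$ has degree zero and the degree-zero divisor class group of $\cX/\mathbb{F}_q$ is finite, a positive multiple of $Q_i-Q_{i-1}$ is principal. For each $i\in\{2,\ldots,m\}$ I would fix a function $h_i\in \mathbb{F}_q(\cX)^\times$ with $\divv(h_i)=a_iQ_i-a_iQ_{i-1}$, so that for any $h\in R_{\negQ}\setminus\{0\}$ the pole-order vector of $h\prod_i h_i^{b_i}$ differs from that of $h$ by $\sum_i b_i\negeta^i$. Consequently $\hH(\negQ)+\Theta_m\subseteq \hH(\negQ)$, and since $\Theta_m$ is an additive group this makes $\hH(\negQ)$ $\Theta_m$-invariant.

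Next, I would strengthen this to $\hGamma(\negQ)+\Theta_m\subseteq \hGamma(\negQ)$, which yields the inclusion $(\hGamma(\negQ)\cap \cC_m)+\Theta_m\subseteq \hGamma(\negQ)$ at once. Given $\negalpha\in \hGamma(\negQ)$, $\negtheta\in\Theta_m$, and any index $j$, the translation $\negbeta\mapsto \negbeta-\negtheta$ sends $\nabla_j^m(\negalpha+\negtheta)$ into $\hH(\negQ)$ (by the $\Theta_m$-invariance above) and converts the defining conditions $\beta_j=\alpha_j+\theta_j$, $\beta_k\leq \alpha_k+\theta_k$ into $(\beta-\theta)_j=\alpha_j$, $(\beta-\theta)_k\leq \alpha_k$; thus it furnishes a bijection $\nabla_j^m(\negalpha+\negtheta)\to\nabla_j^m(\negalpha)$. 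By Proposition \ref{absmax} the latter set equals $\{\negalpha\}$, hence $\nabla_j^m(\negalpha+\negtheta)=\{\negalpha+\negtheta\}$, and the same proposition implies $\negalpha+\negtheta\in\hGamma(\negQ)$.

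For the reverse inclusion, I would reduce an arbitrary $\negalpha=(\alpha_1,\ldots,\alpha_m)\in\hGamma(\negQ)$ into $\cC_m$ via a greedy Euclidean-type procedure on the coordinates. Define $b_2,\ldots,b_m\in \ZZ$ by reverse recursion: set $b_m:=\lfloor\alpha_m/a_m\rfloor$, and once $b_{j+1},\ldots,b_m$ are known, $b_j:=\lfloor(\alpha_j+b_{j+1}a_{j+1})/a_j\rfloor$ for $j=m-1,\ldots,2$. A direct computation of $\negalpha':=\negalpha-\sum_{i=2}^m b_i\negeta^i$ shows that its $j$-th coordinate equals $(\alpha_j+b_{j+1}a_{j+1})-b_ja_j\in[0,a_j)$ for every $j\in\{2,\ldots,m\}$ (with the convention $b_{m+1}:=0$), so $\negalpha'\in \cC_m$; by the previous step $\negalpha'\in\hGamma(\negQ)\cap \cC_m$, and $\negalpha=\negalpha'+\sum_{i=2}^m b_i\negeta^i$ exhibits $\negalpha$ as an element of $(\hGamma(\negQ)\cap \cC_m)+\Theta_m$. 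Note that the first coordinate of $\negalpha'$ is unconstrained, matching the definition of $\cC_m$.

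The main obstacle is the $\Theta_m$-invariance of $\hGamma(\negQ)$: although the shift bijection between the $\nabla$-sets is conceptually natural, verifying it requires the combination of two ingredients—the explicit functions $h_i$ realizing each generator $\negeta^i$ as a principal pole-order shift (so that $\hH(\negQ)$ itself is $\Theta_m$-invariant), and the fact that each $\negeta^i$ is supported on two coordinates with opposite $\pm a_i$ entries, which ensures the inequalities defining $\nabla_j^m$ survive the translation. The reverse inclusion, by contrast, is elementary once this invariance is available.
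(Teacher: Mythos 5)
The paper does not prove this theorem: it is imported verbatim from \cite{TT} (Theorem 3.7), so there is no internal proof to compare your argument against. Your proof is correct and self-contained: the functions $h_i$ with $\divv(h_i)=a_iQ_i-a_iQ_{i-1}$ give $\hH(\negQ)+\Theta_m\subseteq\hH(\negQ)$; the translation bijection $\nabla_j^m(\negalpha+\negtheta)\to\nabla_j^m(\negalpha)$ together with Proposition \ref{absmax} (the only place $q\geq m$ is needed) upgrades this to $\hGamma(\negQ)+\Theta_m\subseteq\hGamma(\negQ)$; and your reverse-recursive Euclidean reduction correctly normalizes coordinates $2,\ldots,m$ into $[0,a_j)$ while leaving the first coordinate free, exactly as $\cC_m$ requires. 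This is essentially the argument of \cite{TT}. One cosmetic remark: you rightly read the generators of $\Theta_m$ as $\negeta^2,\ldots,\negeta^m$; the displayed definition of $\Theta_m$ in Section 2 writes $\negeta^1,\ldots,\negeta^{m-1}$, which is an indexing slip corrected implicitly by \eqref{eta_i}.
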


Notice that, since $\hGamma(\mathbf{Q})\cap \cC_m$ is finite and $\Theta_m$ is finitely generated, $\hGamma(\mathbf{Q})$ is determined by a finite number of elements in $\hH(\negQ)$. We also observe that the set $\cC_m$ and the elements $\negeta^i$'s are slightly modified from that defined in \cite[Section 3]{TT}. This modification does not affect the conclusions and its purpose is to make our subsequent computations simpler.\\

The absolute maximal elements in $\hH(\negQ)$ are also related to Riemann-Roch spaces as below. For $\negalpha=(\alpha_1, \ldots , \alpha_m)\in \ZZ^m$, define
\begin{equation}\label{Gamma alpha}
\hGamma_{\mathbf{Q}}(\negalpha):=\{\negbeta\in \hGamma(\mathbf{Q}) \ : \ \negbeta\leq \negalpha\},
\end{equation} 
where $\negbeta\leq \negalpha$ means that $\beta_j\leq \alpha_j$ for $j=1,\ldots,m$. Note that $\hGamma_{\mathbf{Q}}(\negalpha)$ is a finite subset of $\ZZ^m$. For $i\in \{1,\ldots,m\}$ and $\negbeta,\negbeta'\in \hGamma_{\mathbf{Q}}(\negalpha)$, define
$$\negbeta\equiv_i\negbeta' \ \mbox{if and only if} \ \beta_i=\beta_i'.$$
A straightforward verification shows that $\equiv_i$ is an equivalence relation, and so we may consider the quotient set $\hGamma_{\mathbf{Q}}(\negalpha)/\equiv_i$ as follows.

\begin{theorem} [\cite{TT},~Theorem 3.5] \label{dimension} Let $\negalpha\in \ZZ^m$ and $i\in\{1,\ldots m\}$, and assume that $q\geq m$. Then,
$$\ell(D_{\negalpha})=\#\left(\hGamma_{\mathbf{Q}}(\negalpha)/\equiv_i\right).$$
\end{theorem}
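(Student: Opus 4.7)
By symmetry it suffices to treat the case $i=1$; the argument for any other $i$ is identical. The plan is to compute $\ell(D_{\negalpha})$ via a telescoping sum along the first coordinate, and then to match the resulting count against the set of first coordinates realized by the elements of $\hGamma_{\negQ}(\negalpha)$.

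First, choose $t_0\in\ZZ$ so negative that $\deg D_{(t_0,\alpha_2,\ldots,\alpha_m)} < 0$; then $\ell(D_{(t_0,\alpha_2,\ldots,\alpha_m)}) = 0$. Passing from $D_{(t-1,\alpha_2,\ldots,\alpha_m)}$ to $D_{(t,\alpha_2,\ldots,\alpha_m)}$ adjoins the single point $Q_1$, so the Riemann--Roch dimension grows by either $0$ or $1$. Telescoping over $t_0 < t \le \alpha_1$ gives
$$\ell(D_{\negalpha}) \;=\; \#\bigl\{\,t\le \alpha_1 : \ell(D_{(t,\alpha_2,\ldots,\alpha_m)}) > \ell(D_{(t-1,\alpha_2,\ldots,\alpha_m)})\,\bigr\}.$$
By Proposition \ref{prop 2.1.3 da tese}(2), the condition inside the set is equivalent to $\nabla_1^m((t,\alpha_2,\ldots,\alpha_m)) \neq \emptyset$. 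Denote the resulting set of integers by $T$.

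The core of the argument is to show that $T$ coincides with the set $T'$ of first coordinates appearing in $\hGamma_{\negQ}(\negalpha)$, since by the definition of $\equiv_1$ one has $\#T' = \#(\hGamma_{\negQ}(\negalpha)/\equiv_1)$. The inclusion $T'\subseteq T$ is immediate: each $\neggamma\in \hGamma_{\negQ}(\negalpha)$ with $\gamma_1=t$ lies in $\nabla_1^m((t,\alpha_2,\ldots,\alpha_m))$, forcing $t\in T$. For the reverse inclusion, fix $t\in T$ and pick $\negbeta\in \nabla_1^m((t,\alpha_2,\ldots,\alpha_m))$. Invoking the lub-representation of $\hH(\negQ)$ from \cite[Theorem 3.4]{TT}, write $\negbeta=\lub\{\negbeta^1,\ldots,\negbeta^m\}$ with each $\negbeta^j\in \hGamma(\negQ)$. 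Since $t=\beta_1=\max_j \beta_1^j$, some $\negbeta^k$ satisfies $\beta_1^k=t$; combined with $\beta_j^k\le \beta_j \le \alpha_j$ for $j\neq 1$ and $\beta_1^k=t\le \alpha_1$, this places $\negbeta^k\in \hGamma_{\negQ}(\negalpha)$ with first coordinate $t$, so $t\in T'$.

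Combining the telescoping identity with $T=T'$ yields $\ell(D_{\negalpha})=\#(\hGamma_{\negQ}(\negalpha)/\equiv_1)$, as required. The only nontrivial step is the inclusion $T\subseteq T'$, which requires lifting an arbitrary element of $\nabla_1^m$ to an absolute maximal one with the same prescribed first coordinate; this is precisely where the lub-generation of $\hH(\negQ)$ by $\hGamma(\negQ)$ is essential. Everything else is bookkeeping with Proposition \ref{prop 2.1.3 da tese}.
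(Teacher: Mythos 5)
Your argument is correct. Note that this paper does not actually prove the statement---it is imported verbatim from \cite{TT}---so there is no in-text proof to compare against; your route (telescoping $\ell$ along the first coordinate from a divisor of negative degree, converting the jump condition into $\nabla_1^m\bigl((t,\alpha_2,\ldots,\alpha_m)\bigr)\neq\emptyset$ via Proposition \ref{prop 2.1.3 da tese}(2), and then using the lub-generation of $\hH(\negQ)$ by $\hGamma(\negQ)$ from \cite{TT} to identify the jump positions exactly with the first coordinates occurring in $\hGamma_{\negQ}(\negalpha)$) is the natural one and matches the proof in the cited reference. The hypothesis $q\geq m$ is used where needed, and the key step---extracting from $\negbeta=\lub\{\negbeta^1,\ldots,\negbeta^m\}$ some $\negbeta^k$ with $\beta^k_1=\beta_1$ and $\negbeta^k\leq\negalpha$---is sound.
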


\begin{corollary} [\cite{TT},~Corollary 3.6] \label{basis} Under the same assumptions as in the previous result, let $(\hGamma_{\mathbf{Q}}(\negalpha)/\equiv_i) \ =\{[\negbeta^1]_i,\ldots,[\negbeta^{\ell(D_{\negalpha})}]_i\}$. For each $1\leq j\leq\ell(D_\negalpha)$, choose one $f_j\in \FF(\cX)$ such that 
$$(-v_{Q_1}(f_j),\ldots,-v_{Q_m}(f_j))=\negbeta^j.$$
Then  $\{f_1,\ldots,f_{\ell(D_{\negalpha})}\}$ constitutes a basis for the Riemann-Roch space $\cL(D_{\negalpha})$.
\end{corollary}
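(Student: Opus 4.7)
The proof plan rests on three ingredients: membership of each $f_j$ in $\cL(D_{\negalpha})$, linear independence of the chosen family, and a dimension count provided by Theorem~\ref{dimension}.

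First, I would verify that $f_j \in \cL(D_{\negalpha})$ for every $j$. Since $\negbeta^j \in \hGamma(\mathbf{Q}) \subseteq \hH(\mathbf{Q})$, any function $f_j$ realizing the pole tuple $\negbeta^j$ lies in $R_{\mathbf{Q}}$, i.e., its only possible poles are among $\{Q_1, \ldots, Q_m\}$. The defining condition $(-v_{Q_1}(f_j), \ldots, -v_{Q_m}(f_j)) = \negbeta^j \leq \negalpha$ translates to $v_{Q_k}(f_j) \geq -\alpha_k$ for every $k$, which together with the absence of poles outside $\{Q_1,\ldots,Q_m\}$ gives $\divv(f_j) + D_{\negalpha} \geq 0$. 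Hence $f_j \in \cL(D_{\negalpha})$.

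Second, I would establish linear independence using the standard trick that functions with pairwise distinct valuations at a fixed place are linearly independent. Because the classes $[\negbeta^j]_i$ are distinct in $\hGamma_{\mathbf{Q}}(\negalpha)/\equiv_i$, the $i$-th coordinates $\beta^j_i$ are pairwise distinct, so the valuations $v_{Q_i}(f_j) = -\beta^j_i$ are pairwise distinct as well. Suppose $\sum_{j} c_j f_j = 0$ with not all $c_j$ zero, and let $j_0$ be the index in the support of this sum minimizing $v_{Q_i}(f_j)$. Then $v_{Q_i}(c_{j_0} f_{j_0}) < v_{Q_i}(c_j f_j)$ for every other $j$ appearing with $c_j \neq 0$, forcing $v_{Q_i}\bigl(\sum_j c_j f_j\bigr) = v_{Q_i}(c_{j_0} f_{j_0}) \in \ZZ$, contradicting that the sum is zero. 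Hence $\{f_1, \ldots, f_{\ell(D_{\negalpha})}\}$ is linearly independent.

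Finally, I would invoke Theorem~\ref{dimension}, which asserts precisely that $\ell(D_{\negalpha}) = \#(\hGamma_{\mathbf{Q}}(\negalpha)/\equiv_i)$, so the number of functions chosen equals $\dim_{\FF_q}\cL(D_{\negalpha})$. Combined with linear independence and the inclusion in $\cL(D_{\negalpha})$, this forces $\{f_1, \ldots, f_{\ell(D_{\negalpha})}\}$ to be a basis.

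The only delicate point is the linear independence argument; it is completely routine once one observes that distinct equivalence classes mod $\equiv_i$ yield distinct valuations at $Q_i$, and this is exactly the design of the equivalence relation. No additional work beyond quoting Theorem~\ref{dimension} is needed for the cardinality, so I do not anticipate any real obstacle in the argument.
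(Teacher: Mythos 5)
Your argument is correct and is the standard one; the paper itself gives no proof of this corollary (it is quoted from \cite{TT}), and your three-step structure --- membership of each $f_j$ in $\cL(D_{\negalpha})$, linear independence from pairwise distinct valuations at $Q_i$, and the cardinality count supplied by Theorem~\ref{dimension} --- is exactly the expected argument. One caveat on your first step: the claim that ``any function $f_j$ realizing the pole tuple $\negbeta^j$ lies in $R_{\mathbf{Q}}$'' does not follow from $\negbeta^j\in\hH(\mathbf{Q})$; that membership only guarantees the \emph{existence} of some function in $R_{\mathbf{Q}}$ with those valuations, while a function prescribed only at $Q_1,\ldots,Q_m$ could a priori have poles elsewhere and then would not lie in $\cL(D_{\negalpha})$. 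The statement must be read with $f_j$ chosen in $R_{\mathbf{Q}}$ (which is how the paper uses it elsewhere, e.g.\ in the definition of $\rho^{-1}(\hGamma_{\mathbf{Q}}(\negalpha))$ preceding Proposition~\ref{mflooreqfloor}); under that reading your membership step, and hence the whole proof, goes through.
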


\subsection{Discrepancy} In \cite[Section 5]{duursma}, Duursma and Park introduced the concept of discrepancy. In the remaining of this section, we establish a connection between discrepancies and absolute maximal elements in generalized Weierstrass semigroups at several points.

\begin{definition}\label{defi discrepancy}
Let $P$ and $Q$ be distinct rational points $P$ and $Q$ on $\mathcal{X}$. A divisor $A \in \mbox{Div}(\mathcal{X})$ is called a \textit{discrepancy} with respect to $P$ and $Q$ if $\mathcal{L}(A)\neq \mathcal{L}(A-Q)$ and $\mathcal{L}(A-P)=\mathcal{L}(A-P-Q)$.
\end{definition}

The following result states an equivalent formulation for the absolute maximal property regarding discrepancies. It will be especially useful to compute absolute maximal elements in Section 4.

\begin{proposition} \label{prop equiv discrepancia}
Let $\negalpha= (\alpha_1, \ldots , \alpha_{m}) \in \ZZ^m$ and assume that $q\geq m$. The following statements are equivalent:
\begin{enumerate}[\rm (i)]
\item $\negalpha \in \hGamma(\mathbf{Q})$;
\item $D_{\negalpha}$ is a discrepancy with respect to any pair of distinct points in $\{Q_1,\ldots,Q_{m}\}$.
\end{enumerate}
\end{proposition}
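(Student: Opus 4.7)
My plan is to prove the equivalence by translating each side into Riemann-Roch dimension statements via Proposition~\ref{prop 2.1.3 da tese} and then matching them against Definition~\ref{defi discrepancy}. For distinct $i,j \in \{1,\ldots,m\}$, the statement that $D_{\negalpha}$ is a discrepancy with respect to $P = Q_i$ and $Q = Q_j$ unfolds into the two conditions: (a) $\ell(D_{\negalpha}) = \ell(D_{\negalpha} - Q_j) + 1$, and (b) $\ell(D_{\negalpha} - Q_i) = \ell(D_{\negalpha} - Q_i - Q_j)$. By Proposition~\ref{prop 2.1.3 da tese}(2), condition (b) is equivalent to $\nabla_j^m(\negalpha - \negei) = \emptyset$, which is the key reformulation I will use.

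For (i) $\Rightarrow$ (ii), I start from $\negalpha \in \hGamma(\mathbf{Q}) \subseteq \hH(\mathbf{Q})$; Proposition~\ref{prop 2.1.3 da tese}(1) immediately gives condition (a) for every $j$. For condition (b), it suffices to verify $\nabla_j^m(\negalpha - \negei) = \emptyset$, which I would establish by contradiction: any $\negbeta$ in this set would satisfy $\beta_j = \alpha_j$, $\beta_i \leq \alpha_i - 1$, and $\beta_k \leq \alpha_k$ for all $k \neq i,j$, which forces $\negbeta \in \nabla_j^m(\negalpha) \setminus \{\negalpha\}$ and contradicts the characterization $\nabla_j^m(\negalpha) = \{\negalpha\}$ furnished by Proposition~\ref{absmax}.

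For the converse, condition (a) applied over all ordered pairs covers every $j$, so Proposition~\ref{prop 2.1.3 da tese}(1) yields $\negalpha \in \hH(\mathbf{Q})$. Assuming $\negalpha \notin \hGamma(\mathbf{Q})$ toward a contradiction, Proposition~\ref{absmax} provides some index $i$ and some $\negbeta \in \nabla_i^m(\negalpha) \setminus \{\negalpha\}$; picking $j \neq i$ with $\beta_j < \alpha_j$, I would observe that $\negbeta$ lies in $\nabla_i^m(\negalpha - \negej)$. Then Proposition~\ref{prop 2.1.3 da tese}(2) gives $\ell(D_{\negalpha} - Q_j) = \ell(D_{\negalpha} - Q_j - Q_i) + 1$, contradicting condition (b) of the discrepancy of $D_{\negalpha}$ relative to the pair $P = Q_j$, $Q = Q_i$. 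The only genuine care required is in keeping the asymmetric roles of $P$ and $Q$ in Definition~\ref{defi discrepancy} aligned with the correct $\nabla_j^m$ set; once that dictionary is set up via Proposition~\ref{prop 2.1.3 da tese}, both implications reduce to short combinatorial observations about $\nabla$-sets.
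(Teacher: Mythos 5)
Your proof is correct and follows essentially the same route as the paper: both translate the discrepancy conditions into statements about the sets $\nabla_j^m(\negalpha-\negei)$ via Proposition~\ref{prop 2.1.3 da tese} and then derive a contradiction with absolute maximality. The only cosmetic difference is that you invoke the singleton characterization $\nabla_i^m(\negalpha)=\{\negalpha\}$ from Proposition~\ref{absmax}, whereas the paper works directly with the defining condition $\nabla_J(\negalpha)=\emptyset$ by constructing the index set $J=\{t : \beta_t=\alpha_t\}$; both are equivalent and equally rigorous.
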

\begin{proof}
$(i) \Rightarrow (ii):$ Note that, by Proposition \ref{prop 2.1.3 da tese}(1), $\ell(D_{\negalpha}) = \ell(D_{\negalpha} - Q_i)+1$ for all $i \in \{1,\ldots,m\}$, since $\negalpha \in \widehat{H} (Q_1, \ldots , Q_{m})$. So $\mathcal{L}(D_{\negalpha})\neq \mathcal{L}(D_{\negalpha}-Q_i)$ for all $i \in \{1,\ldots,m\}$. Now, suppose that $\mathcal{L}(D_{\negalpha}-Q_i) \neq \mathcal{L}(D_{\negalpha}-Q_i - Q_j)$ for some $j \neq i$. Hence, by Proposition \ref{prop 2.1.3 da tese}(2), it follows that $\nabla_{j}^{m}(\negalpha-\negei) \neq \emptyset$, where $\negei = (0,\ldots,0,1,0,\ldots,0) \in \ZZ^m$. Let $\negbeta=(\beta_1,\ldots,\beta_m) \in \nabla_{j}^{m}(\negalpha-\negei)$. As $i \neq j$, we have $\negbeta\in \nabla_{j}^{m}(\negalpha)$. Consider ${J = \{t \in \{1,\ldots,m\} \ : \ \beta_t = \alpha_t\}}$. Note that $J \neq \emptyset$ since $j \in J$. Furthermore, we have $J \subsetneq \{1,\ldots,m\}$ since $i \notin J$. We thus conclude that $\negbeta \in \nabla_{J}(\negalpha)$, contrary to $\negalpha\in\hGamma(\negQ)$. Consequently, $\mathcal{L}(D_{\negalpha}-Q_i) = \mathcal{L}(D_{\negalpha}-Q_i - Q_j)$ for all distinct $i , j \in \{1,\ldots,m\}$ and therefore $D_{\negalpha}$ is a discrepancy with respect to any distinct pair of points ${Q_i,Q_j\in \{Q_1,\ldots,Q_{m}\}}$.

$(ii) \Rightarrow (i):$ Observe that $\negalpha\in \hH(\negQ)$. Suppose now that $\nabla_{J}(\negalpha) \neq \emptyset$ for some $J \subsetneq \{1,\ldots,m\}$ with $J \neq \emptyset$ and let $\negbeta \in \nabla_{J}(\negalpha)$. Thus, for $i \in \{1,\ldots,m\} \setminus J$ and $j \in J$, we have $\negbeta \in \nabla_{j}^{m}(\negalpha-\mathbf{e}_i)$. Proposition \ref{prop 2.1.3 da tese}(2) yields $\ell(D_{\negalpha} - Q_i) \neq \ell(D_{\negalpha} - Q_i - Q_j )$, which contradicts the fact that $D_{\negalpha}$ is a discrepancy with respect to $Q_i$ and $Q_j$ for any pair of distinct rational points $Q_i,Q_j\in \{Q_1,\ldots,Q_{m}\}$. Therefore, $\negalpha$ is an absolute maximal element of $\hH(\negQ)$.
\end{proof}

\section{The supported floor of divisors $D_\negalpha$}

In this section we introduce the notion of the supported floor of a divisor. Roughly speaking, the supported floor of a divisor $D$ with $\ell(D)>0$ will be the divisor of minimum degree with support contained in the support of $D$ and whose the associated Riemann-Roch spaces coincide. We will explore the outcomes of this idea and study its relations with the generalized Weierstrass semigroups. As in the previous sections, $\cX$ denotes a curve over $\Fq$ and $\negQ$ an $m$-tuple of distinct rational points $Q_1,\ldots,Q_m$ on $\cX$. Recall that $D_\negalpha$ for $\negalpha\in \ZZ^m$ represents the divisor $\alpha_1Q_1+\cdots+\alpha_mQ_m$ on $\cX$.\\

The concept of \textit{floor} of a divisor $G$ on $\cX$ with $\ell(G)>0$ was introduced in \cite{maharaj}. Denoted by $\lfloor G\rfloor$, the floor of $G$ is the unique divisor $G'$ on $\cX$ of minimum degree such that $\cL(G)=\cL(G')$. Recall that the greatest common divisor of $D=\sum_{P\in \cX} v_P(D) P$ and $D'=\sum_{P\in \cX} v_P(D') P$ is $\mbox{gcd}(D,D'):=\sum_{P\in \cX} \min\{v_P(D),v_P(D')\} P$.

\begin{theorem} [\cite{maharaj}, Theorem 2.6] \label{floor} Let $G$ be a divisor on $\cX$ with $\ell(G)>0$ and let $h_1,\ldots,h_s\in \cL(G)$ be a spanning set for $\cL(G)$. Then,
$$\lfloor G \rfloor=-\gcd(\divv(h_i) \ : \ i=1,\ldots,s).$$
\end{theorem}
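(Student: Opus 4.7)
The plan is to verify that $G_0 := -\gcd(\divv(h_i) : i=1,\ldots,s)$ satisfies the two defining properties of $\lfloor G\rfloor$, namely (a) $\cL(G_0) = \cL(G)$, and (b) $G_0$ has minimum degree among all divisors $G'$ on $\cX$ with $\cL(G') = \cL(G)$. Everything reduces to a direct valuation-theoretic unpacking of the definitions, so no serious obstacle is anticipated.

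For (a), I would first observe that $G_0 \leq G$: since $h_i \in \cL(G)$, we have $v_P(h_i) \geq -v_P(G)$ for every place $P$ and every $i$, hence
$$v_P(G_0) = -\min_i v_P(h_i) \leq v_P(G),$$
which gives $\cL(G_0) \subseteq \cL(G)$ immediately. For the reverse inclusion, any nonzero $f \in \cL(G)$ can be written as $f = \sum_i c_i h_i$ with $c_i \in \Fq$, since the $h_i$ span $\cL(G)$ over $\Fq$. The ultrametric inequality (together with $v_P(c_i) \geq 0$ for $c_i\in\Fq$) yields
$$v_P(f) \geq \min_i\bigl(v_P(c_i)+v_P(h_i)\bigr) \geq \min_i v_P(h_i) = -v_P(G_0)$$
for every place $P$, so $\divv(f)+G_0\geq 0$, that is, $f \in \cL(G_0)$.

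For (b), let $G'$ be any divisor with $\cL(G') = \cL(G)$. Each $h_i$ lies in $\cL(G')$, so $\divv(h_i) \geq -G'$; taking the pointwise minimum over $i$ gives $\gcd(\divv(h_i)) \geq -G'$, i.e., $G_0 \leq G'$, and in particular $\deg G_0 \leq \deg G'$. Moreover, if $G'$ also attains the minimum degree, then $G'-G_0\geq 0$ together with $\deg(G'-G_0)=0$ forces $G'=G_0$, which gives the uniqueness implicit in the definition of the floor. Combining (a) and (b) yields $\lfloor G\rfloor = G_0$.

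The only point requiring genuine care is the ultrametric step in the second paragraph: one must verify that a finite $\Fq$-linear combination has valuation bounded below by the minimum of the valuations of its summands. This is standard but is the single inequality that makes the whole argument work, and it is precisely the reason a spanning set suffices rather than a basis.
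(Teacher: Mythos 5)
Your argument is correct: part (a) shows $\cL(G_0)=\cL(G)$ via $G_0\leq G$ and the ultrametric inequality applied to the spanning set, and part (b) shows $G_0\leq G'$ for every $G'$ with $\cL(G')=\cL(G)$, which gives both minimality of degree and uniqueness. The paper states this theorem as a citation from \cite{maharaj} without reproducing a proof, and your valuation-theoretic argument is the standard one underlying that reference, so there is nothing further to reconcile.
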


Let $\negalpha\in\ZZ^m$ and suppose that $\ell(D_{\negalpha})>0$. If $D_{\negalpha}$ is an effective divisor, by \cite[Theorem~2.5]{maharaj}, the support of $\lfloor D_{\negalpha} \rfloor$ is a subset of $\{Q_1,\ldots,Q_m\}$. However, if $D_{\negalpha}$ is not effective, then the support of $\lfloor D_{\negalpha} \rfloor$ may not be necessarily contained in $\{Q_1,\ldots,Q_m\}$, and that is the reason we define the supported floor.

Given $\negalpha\in\ZZ^m$, let us consider the set
$$
T_{\negalpha}:= \{ \negbeta\in \ZZ^m \mbox{ : }  \negbeta \leq  \negalpha \mbox{ and } \ell( D_{\negbeta})=\ell( D_{\negalpha}) \}.
$$
Notice that $T_{\negalpha}$ is nonempty since $ \negalpha \in T_{\negalpha}$. In addition, as $\ell(D_{\negalpha})>0$, we have that $T_{\alpha}$ is a finite set. 
For $\negbeta=(\beta_1, \ldots, \beta_m)\in\ZZ^m$, the \textit{norm} of $\negbeta$ is the integer $|\negbeta|:=\beta_1 + \cdots + \beta_m$.

\begin{proposition} \label{prop m floor}
There is a unique $\negbeta \in T_{\negalpha}$ with minimum norm in ${\{|\neggamma| \ : \ \neggamma\in T_\negalpha\}}$.
\end{proposition}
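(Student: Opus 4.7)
The proof plan is to establish uniqueness via a componentwise minimum argument, showing that $T_\negalpha$ is closed under taking coordinate-wise minima; existence then follows from the finiteness of $T_\negalpha$ already noted in the text. The main idea exploits the fact that the support of $D_\negalpha$ is contained in $\{Q_1,\ldots,Q_m\}$, which will force the Riemann-Roch spaces to behave well under intersection.

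First I would set up the key lemma: if $\negbeta, \negbeta' \in T_\negalpha$ and $\negeta \in \ZZ^m$ is defined by $\eta_i := \min\{\beta_i, \beta_i'\}$ for $i=1,\ldots,m$, then $\negeta \in T_\negalpha$. Since $\negeta \leq \negbeta \leq \negalpha$, I have $\cL(D_\negeta) \subseteq \cL(D_\negbeta) = \cL(D_\negalpha)$, so $\ell(D_\negeta) \leq \ell(D_\negalpha)$. For the reverse inequality, I would take any $h \in \cL(D_\negalpha) = \cL(D_\negbeta) = \cL(D_{\negbeta'})$ and check that $\divv(h) + D_\negeta \geq 0$. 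At each $Q_i$, I would use $v_{Q_i}(h) + \beta_i \geq 0$ and $v_{Q_i}(h) + \beta_i' \geq 0$ to conclude $v_{Q_i}(h) + \eta_i \geq 0$. At a point $P \notin \{Q_1,\ldots,Q_m\}$, I would use the crucial fact that the support of $D_\negalpha$ is contained in $\{Q_1,\ldots,Q_m\}$: then $v_P(D_\negalpha) = 0$, so $h \in \cL(D_\negalpha)$ forces $v_P(h) \geq 0 = v_P(D_\negeta)$. This yields $\cL(D_\negalpha) \subseteq \cL(D_\negeta)$, hence $\ell(D_\negeta) = \ell(D_\negalpha)$ and $\negeta \in T_\negalpha$.

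With this lemma in hand, uniqueness is immediate. Suppose $\negbeta$ and $\negbeta'$ both achieve the minimum norm $N := \min\{|\neggamma| : \neggamma \in T_\negalpha\}$, and let $\negeta$ be their coordinate-wise minimum. Since $\eta_i \leq \beta_i$ for all $i$, one has $|\negeta| \leq |\negbeta| = N$, while $\negeta \in T_\negalpha$ forces $|\negeta| \geq N$. Hence $\sum_i (\beta_i - \eta_i) = 0$ with each summand nonnegative, so $\beta_i = \eta_i = \min\{\beta_i, \beta_i'\}$ for every $i$, giving $\negbeta \leq \negbeta'$. By the symmetric argument $\negbeta' \leq \negbeta$, and therefore $\negbeta = \negbeta'$. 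Existence of a minimizer is already guaranteed since $T_\negalpha$ is finite (and nonempty).

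The step I expect to require the most care is the reverse inclusion $\cL(D_\negalpha) \subseteq \cL(D_\negeta)$, because it is the only place where the hypothesis that $D_\negalpha$ is supported on $\{Q_1,\ldots,Q_m\}$ is genuinely used; this is precisely the subtlety motivating the introduction of the \emph{supported} floor rather than the ordinary floor of $D_\negalpha$, and it is also where the analogy with Theorem \ref{floor} has to be adjusted since $D_\negalpha$ need not be effective.
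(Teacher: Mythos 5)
Your proposal is correct and follows essentially the same route as the paper: the heart of both arguments is that the coordinatewise minimum $\negeta$ of $\negbeta,\negbeta'\in T_\negalpha$ again lies in $T_\negalpha$, which the paper obtains from the identity $\cL(D_\negbeta)\cap\cL(D_{\negbeta'})=\cL(\gcd(D_\negbeta,D_{\negbeta'}))$ and you re-derive by the same pointwise valuation check, after which the norm comparison forces $\negbeta=\negbeta'$. The only cosmetic differences are that you prove the gcd identity explicitly rather than citing it, and you conclude uniqueness directly instead of by the strict inequality $|\neggamma|<|\negbeta|$ used in the paper.
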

\begin{proof}
Since $T_{\negalpha}$ is finite and the norm is an integer, $T_{\negalpha}$ has an element whose the norm is minimum in $\{|\neggamma| \ : \ \neggamma\in T_\negalpha\}$. In order to prove that such an element is unique, suppose that there exist $\negbeta,\negbeta'\in T_{\negalpha}$, with ${\negbeta \neq \negbeta'}$, having the minimum norm in $\{|\neggamma| \ : \ \neggamma\in T_\negalpha\}$. As $\negbeta,\negbeta'\in T_\negalpha$, it follows that $\cL(D_{\negbeta}) = \cL(D_{\negalpha}) = \cL(D_{\negbeta'})$. In addition, we have $\cL(D_{\negalpha})=\cL(D_{\negbeta})\cap \cL(D_{\negbeta'})=\cL(\gcd(D_{\negbeta},D_{\negbeta'}))$. So, writing $\gcd(D_{\negbeta},D_{\negbeta'})=D_\neggamma$ for some ${\neggamma = (\gamma_1, \ldots , \gamma_m)\in \ZZ^m}$, we obtain $|\neggamma|<|\negbeta|=|\negbeta'|$, which gives us a contradiction since $\neggamma \in T_{\negalpha}$.
\end{proof}

\begin{definition} \label{P piso}
Let $\negalpha\in \ZZ^m$ such that $\ell(D_{\negalpha})>0$. The \emph{supported floor} of $D_{\negalpha}$ at $\negQ$ (or simply $\negQ$-\emph{floor} of $D_\negalpha$ for short) is the divisor $D_{\negbeta}$, where $\negbeta \in T_{\negalpha}$ is the element of minimum norm in $\{|\neggamma| \ : \ \neggamma\in T_\negalpha\}$ as in Proposition \ref{prop m floor}. The $\mathbf{Q}$-floor of $D_{\negalpha}$ will be denoted by $\lfloor D_{\negalpha} \rfloor_{\negQ}$.
\end{definition}

\begin{lemma}\label{lema P piso}
Let $\negalpha\in \mathbb{Z}^m$ and assume that $q\geq m$. 
\begin{enumerate}[\rm (1)]
\item If $\lfloor D_\negalpha\rfloor_\negQ=D_\negbeta$, then $\negbeta\in \hH(\negQ)$. In particular, $\lfloor D_{\negalpha} \rfloor_{\mathbf{Q}} = D_{\negalpha}$, provided $\negalpha\in \hH(\mathbf{Q})$; 

\item If $\negbeta, \negbeta' \in T_{\negalpha} \cap \hH(\mathbf{Q})$, then $\negbeta = \negbeta'$;

\item If $\negbeta \in T_{\negalpha}$, then $\lfloor D_{\negbeta} \rfloor_{\mathbf{Q}} = \lfloor D_{\negalpha} \rfloor_{\mathbf{Q}}$.
\end{enumerate}
\end{lemma}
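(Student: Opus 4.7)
The plan is to work directly from the definition of $T_{\negalpha}$ and to exploit two recurring facts: (a) for $\negbeta\leq\negalpha$, the inclusion $\cL(D_{\negbeta})\subseteq \cL(D_{\negalpha})$ becomes an equality whenever $\negbeta\in T_{\negalpha}$; and (b) $\cL(D)\cap \cL(D')=\cL(\gcd(D,D'))$, exactly as already used in the proof of Proposition \ref{prop m floor}. Membership in $\hH(\negQ)$ will be detected throughout via Proposition \ref{prop 2.1.3 da tese}(1).

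For (1), let $\negbeta$ be the minimum-norm element of $T_{\negalpha}$. If $\negbeta\notin \hH(\negQ)$, Proposition \ref{prop 2.1.3 da tese}(1) yields an index $i$ with $\ell(D_{\negbeta})=\ell(D_{\negbeta}-Q_i)$, forcing $\cL(D_{\negbeta-\negei})=\cL(D_{\negbeta})=\cL(D_{\negalpha})$ and so placing $\negbeta-\negei$ in $T_{\negalpha}$ with strictly smaller norm---a contradiction. For the ``in particular'' clause, assume $\negalpha\in \hH(\negQ)$; if some $\neggamma\in T_{\negalpha}$ satisfied $\neggamma\neq\negalpha$, then $\neggamma\leq \negalpha-\negei$ for some $i$, so $\cL(D_{\neggamma})\subseteq \cL(D_{\negalpha}-Q_i)$, while Proposition \ref{prop 2.1.3 da tese}(1) applied to $\negalpha$ gives $\ell(D_{\negalpha}-Q_i)=\ell(D_{\negalpha})-1$, contradicting $\ell(D_{\neggamma})=\ell(D_{\negalpha})$. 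Hence $T_{\negalpha}=\{\negalpha\}$ and $\lfloor D_{\negalpha}\rfloor_{\negQ}=D_{\negalpha}$.

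For (2), given $\negbeta,\negbeta'\in T_{\negalpha}\cap \hH(\negQ)$, write $D_{\neggamma}:=\gcd(D_{\negbeta},D_{\negbeta'})$; then $\cL(D_{\neggamma})=\cL(D_{\negbeta})\cap \cL(D_{\negbeta'})=\cL(D_{\negalpha})$, so $\neggamma\in T_{\negbeta}$. Since $\negbeta\in\hH(\negQ)$, the already-established clause of (1) yields $T_{\negbeta}=\{\negbeta\}$, whence $\neggamma=\negbeta$; symmetrically $\neggamma=\negbeta'$. For (3), set $\lfloor D_{\negalpha}\rfloor_{\negQ}=D_{\negbeta^{\star}}$ with $\negbeta^{\star}$ of minimum norm in $T_{\negalpha}$, let $\negbeta\in T_{\negalpha}$, and put $D_{\negdelta}:=\gcd(D_{\negbeta^{\star}},D_{\negbeta})$. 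The same identity gives $\cL(D_{\negdelta})=\cL(D_{\negalpha})$, so $\negdelta\in T_{\negalpha}$ with $|\negdelta|\leq|\negbeta^{\star}|$; minimality forces $\negbeta^{\star}=\negdelta\leq\negbeta$, hence $\negbeta^{\star}\in T_{\negbeta}$. Since $T_{\negbeta}\subseteq T_{\negalpha}$ is immediate from the definitions, any element of $T_{\negbeta}$ with norm strictly less than $|\negbeta^{\star}|$ would contradict the minimality of $|\negbeta^{\star}|$ in $T_{\negalpha}$, so $\negbeta^{\star}$ is also of minimum norm in $T_{\negbeta}$.

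The main obstacle is the $\gcd$-bookkeeping, but it is mild: once one trusts that the $\gcd$ of two divisors supported in $\{Q_1,\ldots,Q_m\}$ stays of the form $D_{\neggamma}$ for some $\neggamma\in\ZZ^m$ (the componentwise-$\min$ description underlying Proposition \ref{prop m floor}), every part reduces to a dimension-drop argument driven by Proposition \ref{prop 2.1.3 da tese}(1).
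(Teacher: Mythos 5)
Your proof is correct and follows essentially the same route as the paper: part (1) via the dimension-drop criterion of Proposition \ref{prop 2.1.3 da tese}(1), part (2) via the componentwise minimum and the identity $\cL(D)\cap\cL(D')=\cL(\gcd(D,D'))$, and part (3) via the inclusion $T_{\negbeta}\subseteq T_{\negalpha}$. The only differences are cosmetic (you spell out the argument for (1), which the paper leaves implicit, and in (3) you verify $\negbeta^{\star}\in T_{\negbeta}$ by a gcd computation where the paper simply combines items (1) and (2)).
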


\begin{proof}
$(1).$ It follows directly from Proposition \ref{prop 2.1.3 da tese}(1).

$(2).$ Suppose, on the contrary, that $\negbeta \neq \negbeta'$. As $\negbeta, \negbeta' \in T_{\negalpha}$, we have $\mathcal{L}(D_{\negalpha})= \mathcal{L}(D_{\negbeta})=\mathcal{L}(D_{\negbeta'})$. Let $\neglambda = (\lambda_1, \ldots , \lambda_m)$, where $\lambda_i = \min \{ \beta_i , \beta_i ' \} $ for $i=1,\ldots,m$. Thus, $\neglambda \neq \negbeta$ or $\neglambda  \neq \negbeta'$ since $\negbeta\neq \negbeta'$. Furthermore, since $\cL(D_{\neglambda}) = \mathcal{L}(D_{\negbeta}) \cap \mathcal{L}(D_{\negbeta'})$, we have $\cL(D_{\neglambda}) = \mathcal{L}(D_{\negbeta}) = \mathcal{L}(D_{\negbeta'})$. 
Proposition \ref{prop 2.1.3 da tese}(1) now yields a contradiction because $\neglambda\leq \negbeta$, $\neglambda\leq \negbeta'$, and $\negbeta,\negbeta'\in \hH(\negQ)$.

$(3).$  Note that $\negbeta\in T_\negalpha$ implies $T_\negbeta\subseteq T_\negalpha$. The assertion thus follows by the items above.
\end{proof}

The next result exploits the relation between $\mathbf{Q}$-floor and the elements of $\hGamma(\negQ)$.

\begin{theorem}\label{lubfloor}
Let $\negalpha\in \ZZ^m$ with $\ell(D_{\negalpha})>0$ and assume that $q\geq m$. Then,
$$\lfloor D_{\negalpha} \rfloor_{\mathbf{Q}} = D_{\lub(\hGamma_{\negQ}(\negalpha))}. $$
\end{theorem}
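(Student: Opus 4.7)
Let $\negmu := \lub(\hGamma_{\negQ}(\negalpha))$. The plan is to show that $\negmu \in T_{\negalpha} \cap \hH(\negQ)$; then Lemma~\ref{lema P piso}(1) (applied to the supported floor) together with Lemma~\ref{lema P piso}(2) will force $\lfloor D_\negalpha\rfloor_\negQ = D_\negmu$.

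First I would justify that $\negmu$ makes sense: the paper records that $\hGamma_\negQ(\negalpha)$ is finite, and it is nonempty because $\ell(D_\negalpha)>0$ provides a nonzero $h\in\cL(D_\negalpha)$, whose pole profile $(-v_{Q_1}(h),\ldots,-v_{Q_m}(h))$ lies in $\hH(\negQ)$ and is $\leq \negalpha$; applying the theorem that writes every element of $\hH(\negQ)$ as a lub of $m$ absolute maximals then produces absolute maximals that sit below $\negalpha$. By construction $\negmu \leq \negalpha$.

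The central step is the identity $\hGamma_\negQ(\negmu)=\hGamma_\negQ(\negalpha)$. The inclusion $\hGamma_\negQ(\negmu)\subseteq \hGamma_\negQ(\negalpha)$ follows from $\negmu\leq\negalpha$, and the reverse inclusion is immediate from the definition of lub. Invoking Theorem~\ref{dimension} for some fixed $i$ then gives
\[
\ell(D_\negmu)=\#\bigl(\hGamma_\negQ(\negmu)/\equiv_i\bigr)=\#\bigl(\hGamma_\negQ(\negalpha)/\equiv_i\bigr)=\ell(D_\negalpha),
\]
so $\negmu\in T_\negalpha$. Next, to see $\negmu\in\hH(\negQ)$, for each $i\in\{1,\ldots,m\}$ pick $\negbeta^i\in \hGamma_\negQ(\negalpha)$ attaining $\beta^i_i=\mu_i$; since $\beta^j_i\leq \mu_i$ for every $j$, one checks that $\negmu=\lub(\{\negbeta^1,\ldots,\negbeta^m\})$, and the generating-set theorem (immediately preceding Theorem~\ref{maximals}) places $\negmu$ in $\hH(\negQ)$.

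Finally, write $\lfloor D_\negalpha\rfloor_\negQ=D_{\negbeta'}$ with $\negbeta'\in T_\negalpha$; Lemma~\ref{lema P piso}(1) yields $\negbeta'\in\hH(\negQ)$, so by Lemma~\ref{lema P piso}(2) applied to $\negmu,\negbeta'\in T_\negalpha\cap\hH(\negQ)$ we conclude $\negmu=\negbeta'$. The main obstacle I anticipate is the argument that $\negmu$ actually lies in $\hH(\negQ)$: a priori $\negmu$ is defined as a lub over the possibly large set $\hGamma_\negQ(\negalpha)$, and trimming this down to a lub of exactly $m$ absolute maximals (so that the generating-set theorem applies) is the one place where the coordinatewise selection described above is essential rather than routine.
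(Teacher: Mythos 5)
Your proposal is correct and follows essentially the same route as the paper: both show that $\lub(\hGamma_{\negQ}(\negalpha))$ lies in $T_{\negalpha}\cap\hH(\negQ)$ via the identity $\hGamma_{\negQ}(\negmu)=\hGamma_{\negQ}(\negalpha)$ and Theorem~\ref{dimension}, then conclude with Lemma~\ref{lema P piso} (the paper uses items (1) and (3), you use (1) and (2) --- an immaterial difference). Your explicit verification that $\negmu\in\hH(\negQ)$, by selecting one absolute maximal per coordinate so that the generating-set theorem applies, is a detail the paper's proof simply asserts, and it is a worthwhile addition.
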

\begin{proof}  Let $\negalpha\in \ZZ^m$ with $\ell(D_{\negalpha})>0$ and let $\neggamma = \text{lub}(\hGamma_{\mathbf{Q}}(\negalpha))\in \ZZ^m$. Hence $\neggamma \in \hH(\mathbf{Q})$ and $\neggamma\leq \negalpha$. Theorem \ref{dimension} now gives $\ell(D_{\neggamma})=\ell(D_{\negalpha})$, since ${\hGamma_{\mathbf{Q}}(\neggamma)=\hGamma_\negQ(\negalpha)}$. Consequently, $\neggamma \in T_{\negalpha}$ and the result follows from the items (1) and (3) in the previous lemma.
\end{proof}

Given $f\in R_{\mathbf{Q}}\backslash\{0\}$, let us denote $\rho(f):=(-v_{Q_1}(f),\ldots,-v_{Q_m}(f))$. For each $\negbeta\in \hGamma_{\mathbf{Q}}(\negalpha)$, choose one function $f_\negbeta\in R_{\mathbf{Q}}$ such that $\rho(f_\negbeta)=\negbeta$. By abuse of notation, we will write $\rho^{-1}(\hGamma_{\mathbf{Q}}(\negalpha))$ for $\{f_\negbeta \ : \ \negbeta\in \hGamma_{\mathbf{Q}}(\negalpha)\}$. The next result provides a sufficient condition for the equality between $\lfloor D_{\negalpha} \rfloor$ and $\lfloor D_{\negalpha} \rfloor_{\mathbf{Q}}$.

\begin{proposition} \label{mflooreqfloor} Let $\negalpha\in \ZZ^m$. Then, $\lfloor D_{\negalpha} \rfloor = \lfloor D_{\negalpha} \rfloor_{\mathbf{Q}}$ provided that $f_\negbeta\in \rho^{-1}(\hGamma_{\mathbf{Q}}(\negalpha))$ has no zero in $\cX\backslash \{Q_1,\ldots,Q_m\}$. 
\end{proposition}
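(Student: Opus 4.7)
The plan is to compute $\lfloor D_\negalpha\rfloor$ directly via Theorem \ref{floor} using the distinguished set of functions $\rho^{-1}(\hGamma_{\negQ}(\negalpha))$, and then to identify the result with $\lfloor D_\negalpha\rfloor_{\negQ}$ using the uniqueness built into Lemma \ref{lema P piso}. First I would note that Corollary \ref{basis} (applied for any fixed $i$) produces a basis of $\cL(D_\negalpha)$ by selecting one function $f_\negbeta$ per equivalence class in $\hGamma_{\negQ}(\negalpha)/\!\equiv_i$; consequently the full set $\rho^{-1}(\hGamma_{\negQ}(\negalpha))$ contains such a basis and therefore spans $\cL(D_\negalpha)$. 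Theorem \ref{floor} then gives
$$\lfloor D_\negalpha\rfloor \;=\; -\gcd\bigl(\divv(f_\negbeta) \ : \ \negbeta\in \hGamma_{\negQ}(\negalpha)\bigr).$$
Each $f_\negbeta\in R_{\negQ}$ has poles only among $Q_1,\ldots,Q_m$, and by hypothesis has no zero in $\cX\setminus\{Q_1,\ldots,Q_m\}$. Thus every $\divv(f_\negbeta)$, and hence their gcd, is supported in $\{Q_1,\ldots,Q_m\}$, so that $\lfloor D_\negalpha\rfloor = D_\neglambda$ for some $\neglambda\in \ZZ^m$.

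Next I would locate $\neglambda$ inside $T_{\negalpha}\cap \hH(\negQ)$. The inequality $\lfloor G\rfloor\leq G$ (which follows from $\divv(h)+G\geq 0$ for every $h\in \cL(G)$) gives $\neglambda\leq \negalpha$; combined with $\cL(D_\neglambda)=\cL(D_\negalpha)$, this places $\neglambda$ in $T_{\negalpha}$. To see that $\neglambda\in \hH(\negQ)$, assume otherwise; Proposition \ref{prop 2.1.3 da tese}(1) then supplies an index $i$ with $\ell(D_\neglambda-Q_i)=\ell(D_\neglambda)$, whence $\cL(D_\neglambda-Q_i)=\cL(D_\neglambda)=\cL(D_\negalpha)$. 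But $\deg(D_\neglambda-Q_i)<\deg D_\neglambda$, contradicting the minimality of $\deg\lfloor D_\negalpha\rfloor$ among divisors whose Riemann-Roch space equals $\cL(D_\negalpha)$.

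Finally, write $\lfloor D_\negalpha\rfloor_{\negQ}=D_\negmu$. Lemma \ref{lema P piso}(1) gives $\negmu\in \hH(\negQ)$, and $\negmu\in T_{\negalpha}$ by the very definition of the $\negQ$-floor. Applying Lemma \ref{lema P piso}(2) to $\neglambda,\negmu\in T_{\negalpha}\cap \hH(\negQ)$ forces $\neglambda=\negmu$, and hence $\lfloor D_\negalpha\rfloor = D_\neglambda = D_\negmu = \lfloor D_\negalpha\rfloor_{\negQ}$. I expect the main obstacle to be the middle step, verifying $\neglambda\in \hH(\negQ)$: this is precisely where the minimality characterization of the classical floor has to interact with the semigroup condition of Proposition \ref{prop 2.1.3 da tese}(1). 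Everything else is a bookkeeping translation between functions in $R_{\negQ}$ and their divisors, finished by the uniqueness already contained in Lemma \ref{lema P piso}.
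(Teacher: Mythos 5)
Your argument is correct, and it opens exactly as the paper's proof does: both use Corollary \ref{basis} to see that $\rho^{-1}(\hGamma_{\negQ}(\negalpha))$ spans $\cL(D_{\negalpha})$, feed this spanning set into Theorem \ref{floor}, and use the no-zeros hypothesis to conclude that $\lfloor D_{\negalpha}\rfloor$ is supported in $\{Q_1,\ldots,Q_m\}$. Where you diverge is the identification step. The paper pushes the gcd computation all the way through: since only the points $Q_i$ contribute, the formula collapses to $\lfloor D_{\negalpha}\rfloor=\sum_{i=1}^m \max\{-v_{Q_i}(f_{\negbeta}) : \negbeta\in\hGamma_{\negQ}(\negalpha)\}\,Q_i=D_{\lub(\hGamma_{\negQ}(\negalpha))}$, and Theorem \ref{lubfloor} identifies this divisor with $\lfloor D_{\negalpha}\rfloor_{\negQ}$. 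You instead stop at ``$\lfloor D_{\negalpha}\rfloor=D_{\neglambda}$ for some $\neglambda\in\ZZ^m$'', prove $\neglambda\in T_{\negalpha}\cap\hH(\negQ)$ (playing Proposition \ref{prop 2.1.3 da tese}(1) against the degree-minimality of the classical floor), and finish with the uniqueness statement of Lemma \ref{lema P piso}(2). Both routes are sound; the paper's is shorter because Theorem \ref{lubfloor} is already in hand, while yours avoids computing the least upper bound explicitly and isolates a pleasant intermediate fact, namely that under the hypothesis the coefficient vector of the classical floor itself belongs to $\hH(\negQ)$. One small housekeeping point that applies to both arguments: one must assume $\ell(D_{\negalpha})>0$ and $q\geq m$ for the floors and the cited results (Corollary \ref{basis}, Proposition \ref{prop 2.1.3 da tese}, Lemma \ref{lema P piso}) to be available; these hypotheses are implicit in the section but worth stating.
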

\begin{proof}
The basic idea is to observe that $\rho^{-1}(\hGamma_{\mathbf{Q}}(\negalpha))$ is a spanning set of $\cL(D_{\negalpha})$; indeed, by Corollary \ref{basis}, $\rho^{-1}(\hGamma_{\mathbf{Q}}(\negalpha))$ contains a basis of $\cL(D_{\negalpha})$ and so spans $\cL(D_{\negalpha})$. Now, by hypothesis, we have $v_P(f_\negbeta)= 0$ for $P\in \cX\backslash \{Q_1,\ldots,Q_m\}$ and for $\negbeta\in \hGamma_{\mathbf{Q}}(\negalpha)$. 
Hence Theorem \ref{floor} yields
$$\begin{array}{rcl}
 \lfloor D_{\negalpha} \rfloor & = & -\gcd(\divv(f_\negbeta) \ : \ \negbeta\in \hGamma_{\mathbf{Q}}(\negalpha))\\
 & = & -\gcd\left(\sum_{P\in \cX} v_P(f_\negbeta)P \ : \ \negbeta\in \hGamma_{\mathbf{Q}}(\negalpha)\right) \\
 & = & -\sum_{P\in \cX} \min\{v_P(f_\negbeta) \ : \ \negbeta\in \hGamma_{\mathbf{Q}}(\negalpha)\} P \\
 & = & \sum_{P\in \cX} \max\{-v_P(f_\negbeta) \ : \ \negbeta\in \hGamma_{\mathbf{Q}}(\negalpha)\} P \\
 & = & \sum_{i=1}^m \max\{-v_{Q_i}(f_\negbeta) \ : \ \negbeta\in \hGamma_{\mathbf{Q}}(\negalpha)\} Q_i,
\end{array}$$
which gives us $\lfloor D_{\negalpha} \rfloor_{\mathbf{Q}} = \lfloor D_{\negalpha} \rfloor$ by Theorem \ref{lubfloor}.
\end{proof}

\section{ Generalized Weierstrass semigroups of certain curves of the form $f(y) = g(x)$} \label{section curves}

Let $\mathcal{X}_{f,g}$ be a curve over $\mathbb{F}_{q}$ having a plane model given by an affine equation of type 
$$f(y) = g(x),$$ where $f(T), g(T)\in \mathbb{F}_q[T]$ with $\deg(f(T))=a $ and $\deg(g(T))=b$ satisfying $\mbox{gcd}(a,b)=1$. Suppose that $\cX_{f,g}$ has genus $(a-1)(b-1)/2$ and is geometrically irreducible. Suppose moreover that there exist $a+1$ distinct rational points $P_1, P_2, \ldots, P_{a+1}$ on $\cX_{f,g}$ such that
\begin{equation}\label{eq1}
a P_1 \sim P_2 + \cdots + P_{a+1}
\end{equation}
and
\begin{equation}\label{eq2}
\ b P_1 \sim b P_{j} \ \mbox{ for } j\in \{2,\ldots,a+1\},
\end{equation}
where $b$ is the smallest positive integer satisfying (\ref{eq2}) and ``$\sim$" represents the linear equivalence of divisors. Notice that, by the above assumptions, $H(P_1)=\langle a,b\rangle$.
Many notable curves over finite fields---as Hermitian curves, Norm-Trace curves, curves from Kummer extensions, among others---arise in the family under consideration and satisfy the  required conditions.

In this section we will compute $\hGamma(\mathbf{P}_m)\cap \cC_m$, where $\mathbf{P}_m:=(P_1, \ldots, P_m)$ for $2\leq m\leq a+1$, which, according to Theorem \ref{maximals}, determines entirely $\hGamma(\mathbf{P}_m)$, the generating set of $\hH(\mathbf{P}_m)$. For this purpose, we assume furthermore that $ q \geq a+1$. 

\begin{lemma} \label{discrep} Let $2\leq m\leq a+1$ and $1\leq i<b$. The divisor 
$$A=(a(b-i)-b(m-1))P_1+iP_2+\cdots+iP_m$$ is a discrepancy with respect to any pair of distinct points in $\{P_1,\ldots,P_m\}$.
\end{lemma}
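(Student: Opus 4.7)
The plan is to produce an explicit function whose pole profile along $(P_1,\dots,P_m)$ equals the coefficient vector of $A$, use it to settle the first half of the discrepancy definition, and then reduce the second half to a Riemann-Roch dimension identity that can be handled via the given linear equivalences.

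First, I would use the relations $aP_1\sim P_2+\cdots+P_{a+1}$ and $bP_1\sim bP_j$ to fix functions $x,\phi_2,\dots,\phi_{a+1}\in\FF_q(\cX_{f,g})^\times$ with
$$\divv(x)=P_2+\cdots+P_{a+1}-aP_1\quad\text{and}\quad\divv(\phi_j)=bP_1-bP_j,$$
and set $F:=x^{b-i}\phi_2\phi_3\cdots\phi_m$. A direct computation yields
$$\divv(F)=-A+D,\qquad D:=(b-i)\sum_{\ell=m+1}^{a+1}P_\ell\ge 0,$$
so $F\in\mathcal{L}(A)$ and $-v_{P_k}(F)$ equals the coefficient of $P_k$ in $A$ for every $k\in\{1,\dots,m\}$. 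In particular $F\in\mathcal{L}(A)\setminus\mathcal{L}(A-P_k)$, which gives the condition $\mathcal{L}(A)\neq\mathcal{L}(A-P_k)$ in the discrepancy definition for each pair.

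For the remaining condition $\mathcal{L}(A-P_j)=\mathcal{L}(A-P_j-P_k)$ (for every $j\neq k$), I would note that, together with the first, it is equivalent---via Propositions \ref{prop equiv discrepancia} and \ref{absmax}---to the single dimension identity
$$\ell(A)=\ell(A-P_1-\cdots-P_m)+1.$$
Since $\divv(F)=D-A$, multiplication by $F$ furnishes an $\FF_q$-linear isomorphism $\mathcal{L}(D-E)\to\mathcal{L}(A-E)$ for every divisor $E$; thus the above identity is equivalent to the assertion that the evaluation map $\mathcal{L}(D)\to\FF_q^m$, $h\mapsto(h(P_1),\dots,h(P_m))$, has one-dimensional image, necessarily equal to $\FF_q\cdot(1,\dots,1)$.

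The main obstacle is proving this last assertion. My approach is Serre duality together with the linear equivalences $\sum_{j=2}^{a+1}P_j\sim aP_1$ and $bP_j\sim bP_1$ to rewrite $K-A$ and $K-A+\sum_{j=1}^{m}P_j$ as divisors whose Riemann-Roch spaces can be read off directly from the Weierstrass semigroup $H(P_1)=\langle a,b\rangle$; one then checks $\ell(K-A+\sum_j P_j)-\ell(K-A)=m-1$, which via Riemann-Roch translates back into the desired identity. The bookkeeping is cleanest when $i=1$, in which case $K-A+\sum_j P_j$ collapses to a multiple of $P_1$ alone, and the general range $1\le i<b$ should follow either by iterating the same reductions or by an induction on $i$ that exploits the equivalence $A(i)+\sum_{\ell=m+1}^{a+1}P_\ell\sim A(i-1)$ coming from the same relations.
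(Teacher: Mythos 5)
Your first half coincides with the paper's: your $F=x^{b-i}\phi_2\cdots\phi_m$ is, up to the sign convention on the $\phi_j$'s, the function $h^{b-i}/(g_2\cdots g_m)$ used there, and the computation $\divv(F)=D-A$ with $D=(b-i)\sum_{\ell=m+1}^{a+1}P_\ell\geq 0$ correctly gives $\cL(A)\neq\cL(A-P_k)$ for all $k$. Your reduction of the remaining conditions to the single identity $\ell(A)=\ell(A-\sum_{j=1}^{m}P_j)+1$ via Propositions \ref{prop equiv discrepancia} and \ref{absmax} is legitimate (the standing hypothesis $q\geq a+1\geq m$ makes them applicable, and $F$ shows that the coefficient vector of $A$ lies in $\hH(\negP_m)$), as is the further translation into the statement that the evaluation map $\cL(D)\to\FF_q^{m}$ has image $\FF_q\cdot(1,\ldots,1)$.

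The gap is that this last assertion---which is the actual content of the lemma---is never proved. The inequality $\ell(A)-\ell(A-\sum_j P_j)\geq 1$ comes for free from the first half; what must be shown is the reverse bound, equivalently $\ell(K-A+\sum_j P_j)\geq\ell(K-A)+m-1$. Your proposed method, rewriting $K-A$ and $K-A+\sum_j P_j$ as divisors whose dimensions can be ``read off'' from $H(P_1)=\langle a,b\rangle$, does not go through: even after applying $aP_1\sim\sum_{k=2}^{a+1}P_k$ and $bP_1\sim bP_j$, these divisors retain support outside $P_1$ with coefficients not divisible by $b$ (for instance, already for $i=1$ one gets $K-A\sim((m-2)b-1-a)P_1+\sum_{k=m+1}^{a+1}P_k$), so their dimensions are not determined by $H(P_1)$ alone except in the degenerate case $m=a+1$. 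The suggested induction on $i$ via $A(i)+\sum_{\ell>m}P_\ell\sim A(i-1)$ is also not workable as stated, since the added effective divisor is supported away from $\{P_1,\ldots,P_m\}$ and the discrepancy property is not preserved under such perturbations in any controlled way. The paper closes exactly this step constructively: for each pair $P,Q$ it exhibits an explicit element of $\cL(K+P+Q-A)\setminus\cL(K+P-A)$, namely $h^{i-1}\prod_{t\neq j}g_t$ (resp.\ $h^{i-1}\prod_{t\neq j,k}g_t$), a monomial in the same building-block functions (in your notation, in $x$ and the $\phi_j^{-1}$). Supplying such witnesses, or an equivalent argument, is what your proof still needs.
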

\begin{proof}
From \eqref{eq1} and \eqref{eq2} there exist functions $h,g_2,\ldots,g_{a+1}\in \Fq(\cX_{f,g})$ such that
\begin{equation} \label{div}
\divv(h)=\sum_{k=2}^{a+1} P_k-aP_1 \ \quad \mbox{and} \ \quad \divv(g_j)=bP_j-bP_1, \ \mbox{ for }j=2,\ldots, a+1.
\end{equation}
Since $b-i>0$, we conclude from \eqref{div} that $h^{b-i}/(g_2\cdots g_m)\in \cL(A)\backslash \cL(A-P_j)$, and we thus have $\cL(A)\neq \cL(A-P_j)$ for $j=1,\ldots,m$. To prove that $\cL(A-P) = \cL(A-P-Q)$ for any pair $P,Q\in\{P_1,\ldots,P_m\}$ with $P \neq Q$, we consider its equivalence $\cL(K+P+Q-A) \neq \cL(K+P-A)$, where $K$ is a canonical divisor on $\cX_{f,g}$. As $H(P_1)=\langle a,b\rangle$, we may take the canonical divisor $K=(ab-a-b-1)P_1$. So
$$
\begin{array}{rcl}
K+P+Q-A & = &(ab-a-b-1)P_1 + P + Q - (a(b-i)-b(m-1)) P_1-i\sum_{k=2}^m P_k\\
        & = &((i-1)a+(m-2)b-1)P_1 + P + Q - i\sum_{k=2}^m P_k.
\end{array}
$$

First, let us consider the case $P=P_1$ and $Q=P_j$ for $2\leq j\leq m$. Hence 
$$h^{i-1} \prod_{t=2\atop t\neq j}^m g_t \in \cL(K+P+Q-A) \backslash \cL(K+P-A)$$ and thus $\cL(A-P) = \cL(A-P-Q)$. Now, if $P=P_j$ and $Q=P_k$ for $2\leq j<k\leq m$, then

$$h^{i-1}\prod_{t=2\atop t\neq j,k}^m g_t \in \cL(K+P+Q-A) \backslash \cL(K+P-A)$$ and the result follows.
\end{proof}

Observe that from the aforementioned assumptions,  we have

$$\cC_m=\{\negalpha\in \ZZ^m \ : \ 0\leq \alpha_i<b \ \mbox{for } i=2,\ldots,m\}.$$
We also have that $\Theta_m$ is generated by the $m$-tuples
\begin{equation}\label{eta_i}
\negeta^i=(0,\ldots,0,\underbrace{-b}_{i\text{-th entry}},b,0,\ldots,0)\in \ZZ^m \quad \mbox{for} \quad i=2,\ldots,m;
\end{equation}
cf. page \pageref{teste}. The next theorem yields information about $\hGamma(\mathbf{P}_m)\cap \cC_m$, which will ensure a complete description of the generating set $\hGamma(\mathbf{P}_m)$ of $\hH(\mathbf{P}_m)$ and consequently of the Riemann-Roch spaces associated with divisors with support in $\{P_1,\ldots,P_m\}$.

\begin{theorem} \label{result} Let $P_1,\ldots,P_{a+1}$ be rational points on $\cX_{f,g}$ and $\cC_m$ be as above. For $2\leq m\leq  a+1$, let
$$\hS_m=\{(a(b-i)-b(m-1),i,\ldots,i)\in \ZZ^m \ : \ i=1,\ldots,b-1\}\cup \{{\bf 0}\}.$$
Then,  $$\hGamma(\mathbf{P}_m)\cap \cC_m=\hS_m.$$
\end{theorem}
\begin{proof} By Proposition \ref{prop equiv discrepancia} and Lemma \ref{discrep}, it follows that $\hS_m \subseteq \hGamma(\mathbf{P}_m) \cap \cC_m$. Hence it suffices to prove that $\hGamma(\mathbf{P}_m)\cap \cC_m\subseteq \hS_m$. For simplicity, let us denote
$$\negalpha^{i,m}=(a(b-i)-b(m-1),i,\ldots,i)\in \ZZ^m.$$
Thus $\hS_m=\{\negalpha^{i,m} \ : \ i=1,\ldots,b-1\} \cup \{\textbf{0}\}$. Notice that  $\alpha^{i,m}_1=\alpha^{i,m-1}_1-b$ for ${m\in \{3,\ldots,a+1\}}$. In order to prove that $\hGamma(\mathbf{P}_m)\cap \cC_m\subseteq \hS_m$, we will use induction on $m$. For $m=2$, it follows immediately from Lemma \ref{discrep}. Let now $m\geq 3$ and suppose that $\hGamma(\mathbf{P}_j)\cap \cC_j=\hS_j$ for $j=2,\ldots,m-1$. Given ${\negalpha=(\alpha_1,\ldots,\alpha_m)\in \hGamma(\mathbf{P}_m)\cap \cC_m}$, 
 let us consider ${s:=\min\{t\in \NN \ : \ \alpha_1+tb\geq 0\}}$. Observe that, by Proposition \ref{absmax}, $\ell(D_{\negalpha})\geq 1$, and thus $|\negalpha| = \deg(D_{\negalpha}) \geq 0$. In addition, $\negalpha \in \cC_m$ and hence $\alpha_1\geq -\sum_{j=2}^m \alpha_j\geq -(m-1)(b-1)$. So $\alpha_1+(m-1)b\geq 0$, which shows that $1\leq s \leq m-1$. 
 
If $1\leq s \leq m-2$, we have $2\leq m-s\leq m-1$. Without loss of generality, assume that $\alpha_2=\min_{2\leq j\leq m}\{\alpha_j\}$. Hence Theorem \ref{maximals} and equation \eqref{eta_i} lead to

\begin{equation*}
\negalpha'=(\alpha_1+sb,\alpha_2,\ldots,\alpha_{m-s},\alpha_{m-s+1}-b,\ldots,\alpha_m-b)\in \hH(\mathbf{P}_m)
\end{equation*}
Since $0\leq \alpha_j< b$ for $j=2,\ldots, m$, we get
$$\negbeta=\mbox{lub}(\negalpha',\textbf{0})=(\alpha_1+sb,\alpha_2,\ldots,\alpha_{m-s},0,\ldots,0)\in \hH(\mathbf{P}_{m-s}).$$
In particular, $\nabla_2^{m-s}(\negbeta)\neq \emptyset$. From the induction hypothesis, there exists $\negalpha^{i,m-s}\in \hS_{m-s}$ such that $\negalpha^{i,m-s}\in\nabla_2^{m-s}(\negbeta)$. As $\alpha_1^{i,m}=\alpha_1^{i,m-s}-sb$ and $\alpha_2=i\leq \alpha_j$ for $j=3,\ldots,m$, it follows that $\negalpha^{i,m}\in \nabla_2^{m}(\negalpha)$. Therefore, $\negalpha=\negalpha^{i,m}$ by Proposition \ref{absmax}, since $\negalpha\in \hGamma(\mathbf{P}_m)$. 

Now, if $s=m-1$, by using a similar argument, we obtain $\alpha_1+(m-1)b\in H(P_1)=\langle a,b \rangle$.  As $\alpha_1+(m-1)b<b$, we have either $\alpha_1+(m-1)b=0$ or $\alpha_1+(m-1)b=ak$ for $k\in \NN$. If $\alpha_1+(m-1)b=0$ then $\alpha_1=-(m-1)b$, which is a contradiction since $\alpha_1\geq -(m-1)(b-1)>-(m-1)b$. Therefore, $\alpha_1=a(b-i)-b(m-1)=\alpha^{i,m}_1$ for some $1\leq i< b$. Suppose that $\negalpha$ and $\negalpha^{i,m}$ are not comparable in the partial order $\leq$, since otherwise we would have a contradiction in the absolute maximality of $\negalpha$ and $\negalpha^{i,m}$. Hence, without loss of generality, we may assume that $\alpha_m<\alpha_m^{i,m}$. 
In this way, we have $\negalpha\in \nabla_1(\alpha^{i,m}_1,\alpha^{i,m}_2+b,\ldots,\alpha^{i,m}_{m-1}+b,\alpha^{i,m}_m)$. However, we claim that 
$$(*) \qquad \nabla_1(\alpha^{i,m}_1,\alpha^{i,m}_2+b,\ldots,\alpha^{i,m}_{m-1}+b,\alpha^{i,m}_m)=\emptyset.$$
Thus $\negalpha$ and $\negalpha^{i,m}$ are comparable in $\leq$, which leads to $\negalpha=\negalpha^{i,m}$ by their absolute maximal property. Therefore, $\hGamma(\mathbf{P}_m)\cap \cC_m\subseteq \hS_m$ and the proof is complete.
\end{proof}

\begin{proof}[Proof of $ \ (*)$:]
In view of  Theorem \ref{prop 2.1.3 da tese}(2), it is sufficient to prove that 
$$\ell\left(D_\negbeta-\sum_{j=2}^m P_j\right)=\ell\left(D_\negbeta-\sum_{j=1}^m P_j\right),$$ since $\nabla_1(\negbeta)=\nabla_1^m(\negbeta-\textbf{1}+\textbf{e}_1)$. Equivalently, let us show that 
$$\ell\left(K-D_\negbeta+\sum_{j=1}^m P_j\right)= \ell\left(K-D_\negbeta+\sum_{j=2}^m P_j\right)+1$$
for $K$ a canonical divisor on $\cX_{f,g}$. Let 
$${A'=(a(b-1)-b(m-1))P_1+\sum_{j=2}^{m-1}(i+b-1)P_{j}+(i-1)P_m}$$ and consider $K=(ab-a-b-1)P_1$. Hence
$$K-A'=((i-1)a+(m-2)b-1)P_1-(i+b-1)P_2-\cdots - (i+b-1)P_{m-1}-(i-1)P_m$$
and 
$$K-A'+P_1=((i-1)a+(m-2)b)P_1-(i+b-1)P_2-\cdots - (i+b-1)P_{m-1}-(i-1)P_m.$$
As
$$\divv\left(h^{i-1}\prod_{j=2}^{m-1} g_j\right)=-((i-1)a+(m-2)b))P_1+\sum_{j=2}^{m-1} (i+b-1)P_j+\sum_{j=m}^{a+1}(i-1)P_j,$$
we have $h^{i-1}\prod_{j=2}^{m-1} g_j\in \cL(K-A'+P_1)\backslash \cL(K-A')$, which proves the claim.
\end{proof}

In what follows, we include a concrete example to illustrate how simple are the elements  that determine the generalized Weierstrass semigroup $\hH(\negP_m)$.

\begin{example}\label{exemplo} Let $\ell$ be a prime power and let $r$ be an odd integer. Let $\cX$ be the curve defined over $\FF_{\ell^{2r}}$ by the affine equation
$$x^{\ell^r+1}=y^\ell+y.$$
This curve has genus $\ell^r(\ell-1)/2$. Let $P_\infty$ be the point at infinity $(0:1:0)$ of $\cX$ and let $P_{0b_j}$ be the points $(0:b_j:1)$ with $b_j\in \FF_{\ell^{2r}}$ in such a way that $b_j^\ell+b_j=0$ for $j=1,\ldots,\ell$. 
Observe that 
$$\divv(x)=\sum_{j=1}^\ell P_{0b_j}-\ell P_\infty \quad \mbox{and} \quad \divv(y-b_j)=(\ell^r+1)(P_{0b_j}-P_\infty) \ \mbox{for } j=1,\ldots,\ell.$$ Hence, $a=\ell$ and $b=\ell^r+1$. Therefore, according to Theorem \ref{result} and Theorem \ref{maximals}, the generalized Weierstrass semigroup $\hH(P_\infty,P_{0b_1},\ldots,P_{0b_{m-1}})$ for $2\leq m\leq \ell^r+1$, is completely determined by $\ell^r+m$ elements of $\hGamma(P_\infty,P_{0b_1},P_{0b_2},\ldots,P_{0b_{m-1}})\cap \cC_m$ and $\Theta(P_\infty,P_{0b_1},\ldots,P_{0b_{m-1}})$. 
For instance, if $\ell=5$, $r=1$, and $m=3$, the Hermitian curve of genus $10$,
$$(0, 0, 0), (13, 1, 1), (8, 2, 2),  (3, 3, 3),  (-2, 4, 4), (-7, 5, 5), (-6, 6, 0), \mbox{and }(0, -6, 6)$$
determine the generalized Weierstrass semigroup $\hH(P_\infty,P_{00},P_{0w^{15}})$, where $w$ is a primitive element of $\FF_{5^2}$. 
When $\ell=r=m=3$, the following 30 elements 

\begin{small}
$$(0, 0, 0), (25, 1, 1), (22, 2, 2), (19, 3, 3), (16, 4, 4), (13, 5, 5), (10, 6, 6), (7, 7, 7), (4, 8, 8), (1, 9, 9),$$ $$(-2, 10, 10), (-5, 11, 11), (-8, 12, 12), (-11, 13, 13), (-14, 14, 14), (-17, 15, 15), (-20, 16, 16),$$ $$(-23, 17, 17), (-26, 18, 18), (-29, 19, 19), (-32, 20, 20), (-35, 21, 21), (-38, 22, 22), (-41, 23, 23),$$ $$(-44, 24, 24), (-47, 25, 25), (-50, 26, 26), (-53, 27, 27), (-28, 28, 0), \mbox{and } (0,-28,28)$$
\end{small} determine entirely the generalized Weierstrass semigroup $\hH(P_\infty,P_{00},P_{0w^{546}})$, where $w$ is a primitive element of $\FF_{3^6}$. 
\end{example}

\section{Riemann-Roch spaces of divisors $D_\negalpha$ on $\cX_{f,g}$}

Let $\cX_{f,g}$ and $P_1,\ldots,P_{a+1}$ as in the previous section and under the same assumptions. Having established the elements that determine the generating set $\hGamma(\negP_{m})$ of the generalized Weierstrass semigroup $\hH(\negP_m)$, where ${\negP_m=(P_1,\ldots,P_m)}$  for $2\leq m\leq a+1$, we now use them to study the Riemann-Roch spaces $\mathcal{L}(D_{\negalpha})$ associated with the divisors $D_{\negalpha} = \alpha_1 P_1 + \cdots + \alpha_m P_m$ for $\negalpha=(\alpha_1,\ldots,\alpha_m)\in \ZZ^m$. From the elements of $\hGamma(\negP_{m})$, we deduce formulas for the dimension and also present a basis of $\mathcal{L}(D_{\negalpha})$. In particular, it allows us to study the supported floor of such divisors.

\begin{theorem} \label{dimension2}
Let $\negalpha = (\alpha_1, \ldots , \alpha_m) \in \ZZ^m$. Then,
$$\ell(D_{\negalpha} )=\sum_{i=1}^{b-1} \max\left\{\left\lfloor\frac{\alpha_1+ia-b(a+1-m)}{b}\right\rfloor+\sum_{j=2}^m \left\lfloor\frac{\alpha_j-i}{b} \right\rfloor+1,0\right\}$$
$$+ \max\left\{\sum_{j=1}^m \left\lfloor\frac{\alpha_j}{b} \right\rfloor+1,0\right\}.$$
\end{theorem}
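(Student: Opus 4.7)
The plan is to apply Theorem \ref{dimension} with the equivalence $\equiv_1$, so that $\ell(D_\negalpha)$ equals the number of distinct first coordinates appearing among the elements of $\hGamma_{\negP_m}(\negalpha)$. Combining Theorem \ref{maximals} with Theorem \ref{result}, I would parameterize $\hGamma(\negP_m)$ as the disjoint union $\bigsqcup_{i=0}^{b-1}(\negalpha^{i,m}+\Theta_m)$, adopting the convention $\negalpha^{0,m}:=\negcero$. A short manipulation of \eqref{eta_i} identifies $\Theta_m$ with the lattice $\{(c_1,\ldots,c_m)\in(b\ZZ)^m : c_1+\cdots+c_m=0\}$, every tuple of this form being realized by a unique integer combination of the $\negeta^k$'s. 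Because $\gcd(a,b)=1$ and $\alpha^{i,m}_1=b(a+1-m)-ai$, distinct values of $i$ produce first coordinates in distinct residue classes modulo $b$; hence the contributions to the count from different $i$'s are automatically disjoint and add rather than overlap.

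For each fixed $i\in\{0,1,\ldots,b-1\}$, I would then count the distinct values of $\beta_1=\alpha^{i,m}_1-(c_2+\cdots+c_m)$ subject to the membership condition $\negalpha^{i,m}+(c_1,\ldots,c_m)\leq\negalpha$. These reduce to $c_k\leq\alpha_k-i$ for $k=2,\ldots,m$ together with $c_2+\cdots+c_m\geq \alpha^{i,m}_1-\alpha_1$, each $c_k$ being a multiple of $b$. Setting $S:=c_2+\cdots+c_m$, the key point is that $S$ attains every multiple of $b$ in the interval $[\alpha^{i,m}_1-\alpha_1,\,bM]$, where $M:=\sum_{j=2}^m\lfloor(\alpha_j-i)/b\rfloor$: one starts from the componentwise maximum $c_k=b\lfloor(\alpha_k-i)/b\rfloor$ and decrements any individual $c_k$ by $b$ without breaking its upper bound, so no gaps occur.

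Counting multiples of $b$ in that interval yields $\max\{M+\lfloor(\alpha_1-\alpha^{i,m}_1)/b\rfloor+1,\,0\}$. Substituting $\alpha_1-\alpha^{i,m}_1=\alpha_1+ia-b(a+1-m)$ for $i\geq 1$ reproduces the first summand of the stated formula, and $i=0$ contributes $\max\{\sum_{j=1}^m\lfloor\alpha_j/b\rfloor+1,\,0\}$. Summing over $i\in\{0,1,\ldots,b-1\}$ completes the argument.

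The step I anticipate as the main obstacle is the no-gaps claim for $S$: one must verify that every multiple of $b$ in the stated interval is realized by a legal tuple $(c_2,\ldots,c_m)$, which is precisely what turns the enumeration into a clean arithmetic-progression count and produces the floor-function closed form. Secondary technicalities are the disjointness of first-coordinate residues across different $i$, which depends essentially on $\gcd(a,b)=1$, and the bookkeeping of the degenerate regime where the interval is empty, which is absorbed by the outer $\max\{\cdot,0\}$.
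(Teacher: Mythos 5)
Your argument follows the paper's proof essentially verbatim: both reduce via Theorem \ref{dimension} to counting the distinct first coordinates of elements of $\hGamma_{\negP_m}(\negalpha)$, parameterize $\hGamma(\negP_m)$ through Theorems \ref{maximals} and \ref{result}, and turn the count for each fixed $i$ into the number of integers in an interval, yielding the same $\max\{\cdot,0\}$ summands. The only differences are cosmetic---you describe $\Theta_m$ directly as the zero-sum sublattice of $(b\ZZ)^m$ where the paper uses the $t_j\mapsto d_j$ substitution---and you make explicit the no-gaps realizability of every admissible value of $\sum_j d_j$ and the mod-$b$ disjointness across distinct $i$, two points the paper's proof leaves implicit.
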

\begin{proof} By Theorem \ref{maximals} and Theorem \ref{result}, we have that the elements of $\hGamma(\negP_m)$ are exactly the $m$-tuples of the following types:

(I) $ \ (-ai+b(a+1-m-t_1),i+b(t_1-t_2),\ldots,i+b(t_{m-2}-t_{m-1}),i+bt_{m-1})$;

(II) $ \ (-bt_1,b(t_1-t_2),\ldots,b(t_{m-2}-t_{m-1}),bt_{m-1})$,

\noindent where $1\leq i<b$ and $t_j\in \ZZ$ for $j=1,\ldots,m-1$. As Theorem \ref{dimension} gives ${\ell(D_{\negalpha} )=\#\left(\hGamma_{\negP_m}(\negalpha)/\equiv_1\right)}$, we will count ${\#\{\beta_1 \ : \ \negbeta=(\beta_1,\ldots,\beta_m)\in \hGamma_{\negP_m}(\negalpha)\}}$.

To do that, let us first count $\#\{\beta_1 \ : \ \negbeta=(\beta_1,\ldots,\beta_m)\in \hGamma_{\negP_m}(\negalpha) \ \mbox{of type (I)}\}$, that is,
\begin{equation} \label{eq:absmaxalphaI}
\begin{array}{l}
\beta_1=-ai+b(a+1-m-t_1)  \leq   \alpha_1,\\
\beta_j=i+b(t_{j-1}-t_j) \leq  \alpha_j , \qquad \mbox{for} \qquad j=2,\ldots,m-1, \\
\beta_m=i+bt_m\leq \alpha_m.
\end{array}
\end{equation}
Writing $d_j=t_{j-1}-t_j$ for $j=2,\ldots,m-1$, and $d_m=t_{m-1}$, we obtain $t_1=\sum_{j=2}^{m}d_j$. We may thus rewrite \eqref{eq:absmaxalphaI} as
\begin{equation} \label{eq:absmaxe}
\begin{array}{l}
\beta_1=-ai+b(a+1-m-\sum_{j=2}^m d_j)  \leq   \alpha_1\\
\beta_j=i+bd_j \leq  \alpha_j  \qquad \mbox{for} \quad j=2,\ldots,m.
\end{array}
\end{equation}
Hence we must compute the number of distinct values ${-ai+b(a+1-m-\sum_{j=2}^m d_j)}$, where the $d_j$'s satisfy the constraints in \eqref{eq:absmaxe}. This number equals to number of integers $\sum_{j=2}^m d_j$ since $\gcd(a,b)=1$. From \eqref{eq:absmaxe} we obtain $i+bd_j \leq  \alpha_j$ for $j=2,\ldots,m$, and thus $d_j\leq \lfloor \frac{\alpha_j-i}{b}\rfloor$. It implies that $\sum_{j=2}^m d_j\leq \sum_{j=2}^m \lfloor \frac{\alpha_j-i}{b}\rfloor$. From \eqref{eq:absmaxe} we also get
$$\sum_{j=2}^m d_j \geq \left\lceil -\frac{\alpha_1+ai-b(a+1-m)}{b}\right\rceil=-\left\lfloor \frac{\alpha_1+ai-b(a+1-m)}{b}\right\rfloor.$$
Therefore, the integers $\sum_{j=2}^m d_j$ satisfy
\begin{equation*}\label{eq:aux}
-\left\lfloor \frac{\alpha_1+ai-b(a+1-m)}{b}\right\rfloor\leq \sum_{j=2}^m d_j \leq \sum_{j=2}^m \left\lfloor \frac{\alpha_j-i}{b}\right\rfloor,
\end{equation*}
and the number of such integers is
\begin{equation} \label{eq:sum1}
\max\left\{\left\lfloor\frac{\alpha_1+ia-b(a+1-m)}{b}\right\rfloor+\sum_{j=2}^m \left\lfloor\frac{\alpha_j-i}{b} \right\rfloor+1,0\right\}.
\end{equation}

Finally, it remains to count $\#\{\beta_1 \ : \ \negbeta=(\beta_1,\ldots,\beta_m)\in \hGamma_{\negP_m}(\negalpha) \ \mbox{of type (II)}\}$, that is,
\begin{equation} \label{eq:absmaxalphaII}
\begin{array}{l}
\beta_1=-bt_1  \leq   \alpha_1,\\
\beta_j=b(t_{j-1}-t_j) \leq  \alpha_j , \qquad \mbox{for} \qquad j=2,\ldots,m-1, \\
\beta_m=bt_m\leq \alpha_m.
\end{array}
\end{equation}
Again, writing $d_j=t_{j-1}-t_j$ for $j=2,\ldots,m-1$, and $d_m=t_{m-1}$, we may rephrase \eqref{eq:absmaxalphaII} as
\begin{equation} \label{eq:absmaxe2}
\begin{array}{l}
\beta_1=-b\sum_{j=2}^m d_j  \leq   \alpha_1\\
\beta_j=bd_j \leq  \alpha_j  \qquad \mbox{for} \quad j=2,\ldots,m.
\end{array}
\end{equation}
Proceeding in the same way, we conclude that $\#\{\beta_1 \ : \ \negbeta=(\beta_1,\ldots,\beta_m)\in \hGamma_{\negP_m}(\negalpha) \ \mbox{of type (II)}\}$ is equal to the number of integers $\sum_{j=2}^m d_j$ satisfying
\begin{equation*}\label{eq:aux}
-\left\lfloor \frac{\alpha_1}{b}\right\rfloor\leq \sum_{j=2}^m d_j \leq \sum_{j=2}^m \left\lfloor \frac{\alpha_j}{b}\right\rfloor,
\end{equation*}
which is
\begin{equation} \label{eq:sum2}
\max\left\{\sum_{j=1}^m \left\lfloor\frac{\alpha_j}{b} \right\rfloor+1,0\right\}.
\end{equation}
We thus obtain the claimed formula by summing the values \eqref{eq:sum2} and \eqref{eq:sum1} for $1\leq i<b$.
\end{proof}

A simpler and general expression for the dimension of a divisor with support contained in $\{P_1,\ldots,P_{a+1}\}$ can be deduced by using the case ${m=a+1}$.

\begin{corollary} Let $A=\alpha_1P_1+\cdots +\alpha_{a+1}P_{a+1} \in \mbox{Div}(\cX_{f,g})$. Then,
$$\ell(A)=\sum_{i=0}^{b-1} \max\left\{\left\lfloor\frac{\alpha_1+ia}{b}\right\rfloor+\sum_{j=2}^{a+1} \left\lfloor\frac{\alpha_j-i}{b} \right\rfloor+1,0\right\}.$$
\end{corollary}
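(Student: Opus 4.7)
The plan is to apply Theorem \ref{dimension2} directly at the boundary value $m = a+1$ and absorb the auxiliary ``extra'' max-term of that formula as the $i = 0$ summand of the main sum.

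First I would write $\negalpha = (\alpha_1, \ldots, \alpha_{a+1}) \in \ZZ^{a+1}$, so that $A = D_\negalpha$ and $\ell(A) = \ell(D_\negalpha)$. The standing assumption $q \geq a+1$ of Section \ref{section curves} legitimizes applying Theorem \ref{dimension2} with $m = a+1$, which sits precisely at the upper end of the allowed range $2 \leq m \leq a+1$.

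Next I would substitute $m = a+1$ into the formula of Theorem \ref{dimension2}. The correction term $b(a+1-m)$ vanishes identically, so the first summand of that formula reduces to
$$\sum_{i=1}^{b-1} \max\left\{\left\lfloor\frac{\alpha_1 + ia}{b}\right\rfloor + \sum_{j=2}^{a+1}\left\lfloor\frac{\alpha_j - i}{b}\right\rfloor + 1,\,0\right\},$$
which matches exactly the $i = 1, \ldots, b-1$ portion of the claimed identity.

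Finally I would identify the remaining auxiliary term $\max\{\sum_{j=1}^{a+1} \lfloor \alpha_j/b\rfloor + 1,\,0\}$ with the $i = 0$ value of the general summand: substituting $i = 0$ into $\lfloor(\alpha_1+ia)/b\rfloor + \sum_{j=2}^{a+1}\lfloor(\alpha_j-i)/b\rfloor + 1$ produces precisely $\sum_{j=1}^{a+1}\lfloor \alpha_j/b\rfloor + 1$. Merging both contributions then yields a single summation over $i = 0, 1, \ldots, b-1$, which is the corollary. I do not anticipate any substantive obstacle; conceptually, the ``type (II)'' family of elements of $\hGamma(\negP_m) \cap \cC_m$ in the proof of Theorem \ref{dimension2} can be read as the $i = 0$ instance of the ``type (I)'' family precisely when $a+1-m = 0$, which is why the two sums can be combined at this extremal case but not for smaller $m$.
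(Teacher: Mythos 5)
Your proposal is correct and is exactly the deduction the paper intends: substitute $m=a+1$ into Theorem \ref{dimension2}, note that the term $b(a+1-m)$ vanishes, and recognize the trailing summand $\max\{\sum_{j=1}^{a+1}\lfloor \alpha_j/b\rfloor+1,0\}$ as the $i=0$ instance of the general term, so the two pieces merge into a single sum over $i=0,\ldots,b-1$. Your closing remark that the type (II) elements coincide with the $i=0$ case of type (I) precisely when $a+1-m=0$ is a nice conceptual explanation of why this merging is special to the extremal case.
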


We now employ the elements of the generating set $\hGamma(\negP_m)$ of $\hH(\negP_m)$ to provide a basis for the Riemann-Roch spaces associated with divisors $D_\negalpha$ for $\negalpha\in \ZZ^m$.

\begin{corollary}\label{corbasis} Let $\negalpha=(\alpha_1,\ldots,\alpha_m)\in \ZZ^m$ and let $h, g_2, \ldots, g_m$ be the functions given in \eqref{div}. Then,
$$\bigcup_{i=1}^{b-1} \ \left\{\frac{h^{b-i}}{g_2^{d_2+1}\cdots g_m^{d_m+1}} \ : {d_j\in \ZZ  \ \mbox{with } \ d_j\leq \left\lfloor \frac{\alpha_j-i}{b}\right\rfloor \ \mbox{for } j=2,\ldots,m, \ \mbox{and} \atop -\left\lfloor \frac{\alpha_1+ai-b(a+1-m)}{b}\right\rfloor\leq \sum_{j=2}^m d_j \leq \sum_{j=2}^m \left\lfloor \frac{\alpha_j-i}{b}\right\rfloor}\right\}$$
$$\bigcup \ \left\{\frac{1}{g_2^{d_2}\cdots g_m^{d_m}} \ : {d_j\in \ZZ \ \ \mbox{with } \ d_j\leq \left\lfloor \frac{\alpha_j}{b}\right\rfloor \ \mbox{for } j=2,\ldots,m, \ \mbox{and} \atop -\left\lfloor \frac{\alpha_1}{b}\right\rfloor\leq \sum_{j=2}^m d_j \leq \sum_{j=2}^m \left\lfloor \frac{\alpha_j}{b}\right\rfloor}\right\}$$
is a base of $\cL(D_{\negalpha})$.
\end{corollary}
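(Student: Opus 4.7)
The plan is to derive this basis as a direct consequence of Corollary \ref{basis}, combined with the explicit description of the generating set $\widehat{\Gamma}(\mathbf{P}_m)$ furnished by Theorems \ref{maximals} and \ref{result}.

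First I would compute the pole vector $\rho(\cdot)=(-v_{P_1},\ldots,-v_{P_m})$ of each function in the two families by using \eqref{div}. For $f_{i,\mathbf{d}} := h^{b-i}/(g_2^{d_2+1}\cdots g_m^{d_m+1})$ a direct substitution into the divisors of $h$ and of the $g_j$'s yields
$$
-v_{P_1}(f_{i,\mathbf{d}})=a(b-i)-b(m-1)-b\sum_{j=2}^{m}d_j,\qquad -v_{P_j}(f_{i,\mathbf{d}})=i+bd_j \quad (j\ge 2),
$$
while $v_{P_k}(f_{i,\mathbf{d}})=(b-i)v_{P_k}(h)\geq 0$ for every $k>m$, so $f_{i,\mathbf{d}}\in R_{\mathbf{P}_m}$. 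An analogous computation for the second family gives $\rho(1/(g_2^{d_2}\cdots g_m^{d_m}))=(-b\sum d_j,\, bd_2,\ldots,bd_m)$ and membership in $R_{\mathbf{P}_m}$.

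Next I would match these pole vectors with a parametrization of $\widehat{\Gamma}(\mathbf{P}_m)$. By Theorem \ref{result}, $\widehat{\Gamma}(\mathbf{P}_m)\cap\mathcal{C}_m=\{\negalpha^{i,m}:1\le i<b\}\cup\{\mathbf{0}\}$, and Theorem \ref{maximals} gives $\widehat{\Gamma}(\mathbf{P}_m)=(\widehat{\Gamma}(\mathbf{P}_m)\cap\mathcal{C}_m)+\Theta_m$. Using the same change of coordinates $t_j\mapsto d_j$ on $\Theta_m$ already employed in the proof of Theorem \ref{dimension2}, the translates of $\negalpha^{i,m}$ are exactly the vectors $\rho(f_{i,\mathbf{d}})$ computed above, and the translates of $\mathbf{0}$ are exactly the $\rho(1/(g_2^{d_2}\cdots g_m^{d_m}))$. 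A short check then shows that the constraints $d_j\le\lfloor(\alpha_j-i)/b\rfloor$ (respectively $\lfloor\alpha_j/b\rfloor$) together with the displayed bounds on $\sum_{j=2}^m d_j$ are precisely the componentwise inequality $\rho(\cdot)\leq\negalpha$; consequently the union of the two families realizes exactly $\rho^{-1}(\widehat{\Gamma}_{\mathbf{P}_m}(\negalpha))$.

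With this identification in hand, the conclusion is immediate from Corollary \ref{basis}: the listed functions form a generating family for $\mathcal{L}(D_\negalpha)$ indexed by $\widehat{\Gamma}_{\mathbf{P}_m}(\negalpha)$, and Theorem \ref{dimension2} confirms that the cardinality agrees with $\ell(D_\negalpha)$. The pole-order computation in step one is mechanical; the main delicate point is step two, namely organizing the translations by $\Theta_m$ under the change of variables so that every vector of $\widehat{\Gamma}_{\mathbf{P}_m}(\negalpha)$ is accounted for exactly once.
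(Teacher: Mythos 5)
Your proposal is correct and follows essentially the same route as the paper: the paper likewise computes $\divv\bigl(h^{b-i}/(g_2^{d_2+1}\cdots g_m^{d_m+1})\bigr)$ and $\divv\bigl(1/(g_2^{d_2}\cdots g_m^{d_m})\bigr)$ from \eqref{div}, matches the resulting pole vectors with the parametrization of $\hGamma_{\negP_m}(\negalpha)$ obtained in the proof of Theorem \ref{dimension2} via the substitution $d_j=t_{j-1}-t_j$, and concludes by invoking Corollary \ref{basis}. The only difference is one of phrasing: the paper presents the listed tuples as representatives of the classes in $\hGamma_{\negP_m}(\negalpha)/\equiv_1$ (which is what Corollary \ref{basis} actually indexes a basis by), whereas you speak of indexing by $\hGamma_{\negP_m}(\negalpha)$ itself; this does not change the argument.
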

\begin{proof} Notice from \eqref{eq:absmaxe} and \eqref{eq:absmaxe2} in the proof of Theorem \ref{dimension2} that the representative classes for the elements in $\hGamma_{\mathbf{P}_m}(\negalpha)/\equiv_1$ are given by $m$-tuples in either of the types
$$\left(-ai+b(a+1-m-\textstyle\sum_{j=2}^m d_j),i+bd_2,\ldots,i+bd_m\right),$$
where $d_j\in \ZZ$ for $j=2,\ldots,m$ and satisfy
$$\textstyle-\left\lfloor \frac{\alpha_1+ai-b(a+1-m)}{b}\right\rfloor\leq \sum_{j=2}^m d_j \leq \sum_{j=2}^m \left\lfloor \frac{\alpha_j-i}{b}\right\rfloor \ \ \mbox{with } \ d_j\leq \left\lfloor \frac{\alpha_j-i}{b}\right\rfloor \ \mbox{for } j=2,\ldots,m,$$
and
$$\left(-b\textstyle\sum_{j=2}^m d_j,bd_2,\ldots,bd_m\right),$$
where $d_j\in \ZZ$ for $j=2,\ldots,m$ and satisfy
$$-\left\lfloor \frac{\alpha_1}{b}\right\rfloor\leq \sum_{j=2}^m d_j \leq \sum_{j=2}^m \left\lfloor \frac{\alpha_j}{b}\right\rfloor \ \ \mbox{with } \ d_j\leq \left\lfloor \frac{\alpha_j}{b}\right\rfloor \ \mbox{for } j=2,\ldots,m.$$
For $i=1,\ldots,b-1$, as
\begin{small}
$$\divv\left(\frac{h^{b-i}}{g_2^{d_2+1}\cdots g_m^{d_m+1}}\right)=-\sum_{j=2}^m (i+bd_j)P_j+(b-i)\sum_{j=m+1}^{a+1} P_j - \left(a(b-i)-b(m-1)-b\textstyle\sum_{j=2}^m d_j\right)P_1,$$
\end{small}
we thus have
\begin{equation*}
\rho\left(\frac{h^{b-i}}{g_2^{d_2+1}\cdots g_m^{d_m+1}}\right)=\left(-ai+b(a+1-m-\textstyle\sum_{j=2}^m d_j),i+bd_2,\ldots,i+bd_m\right),
\end{equation*}
where $\rho(\cdot)=(v_{P_1}(\cdot),\ldots,v_{P_m}(\cdot))$. Similarly, since
$$\divv\left(\frac{1}{g_2^{d_2}\cdots g_m^{d_m}}\right)= (\textstyle\sum_{j=2}^m bd_j) P_1 -\displaystyle\sum_{j=2}^m bd_jP_j,$$
we obtain
$$\rho\left(\frac{1}{g_2^{d_2}\cdots g_m^{d_m}}\right)=\left(-b\textstyle\sum_{j=2}^m d_j,bd_2,\ldots,bd_m\right).$$
The proof is therefore complete by invoking Corollary \ref{basis}.
\end{proof}

\begin{example}\label{exemplo2}
Consider the curve $x^{28}=y^3+y$ over $\FF_{3^6}$ in the family of curves in Example \ref{exemplo} (the case $\ell=r=3$). In this case, $a=3$ and $b=28$. Taking $m=3$, let us consider $\negalpha=(8,7,-1)$ and the corresponding divisor $D_\negalpha=8P_\infty+7P_{00}-1P_{0w^{546}}$, where $w$ is a primitive element of $\FF_{3^6}$. As there is no solution $d_2,d_3\in \ZZ$ to
$$\textstyle 0=-\left\lfloor \frac{8}{28}\right\rfloor\leq d_2+d_3 \leq \left\lfloor \frac{7}{28}\right\rfloor+\left\lfloor \frac{-1}{28}\right\rfloor=-1,$$
and the only $i\in \{1,\ldots, 27\}$ such that possibly there are $d_2,d_3\in \ZZ$ with
$$\textstyle-\left\lfloor \frac{3i-20}{28}\right\rfloor\leq d_2+d_3 \leq \left\lfloor \frac{7-i}{28}\right\rfloor+\left\lfloor \frac{-1-i}{28}\right\rfloor$$
are $26$ and $27$, we must find $d_2,d_3\in \ZZ$ in each of the following cases:

\begin{itemize}
\item[$\diamond$] $d_2+d_3=-2$ with $d_2\leq \lfloor\frac{7-26}{28}\rfloor=-1$ and  $d_3\leq  \lfloor\frac{-1-26}{28}\rfloor=-1$; and
\item[$\diamond$] $d_2+d_3=-2$ with $d_2\leq \lfloor\frac{7-27}{28}\rfloor=-1$ and  $d_3\leq  \lfloor\frac{-1-27}{28}\rfloor=-1$.
\end{itemize}
In both cases, the only solution is $d_2=-1$ and $d_3=-1$. Applying these values in the previous result, we obtain that $x^2$ and $x$ forms a basis for the Riemann-Roch space $\cL(D_\negalpha)$.
\end{example}

We can also provide a formula to the $\negP_m$-floor of divisors $D_\negalpha$ for $\negalpha\in \ZZ^m$. To do this, let us consider $N_0:=\max\{\sum_{j=1}^m \lfloor \frac{\alpha_j}{b}\rfloor+1,0 \}$ and 
$$N_i:=\max\left\{\textstyle\left\lfloor\frac{\alpha_1+ia-b(a+1-m)}{b}\right\rfloor+\sum_{j=2}^m \left\lfloor\frac{\alpha_j-i}{b} \right\rfloor+1,0\right\} \ \mbox{for } i=1,\ldots,b-1.$$ 

\begin{proposition}\label{supfloorX}
Let $\negalpha=(\alpha_1,\ldots,\alpha_m)\in \ZZ^m$. Then, 
$$\lfloor D_{\negalpha} \rfloor_{\mathbf{P}_m}  = \alpha_1 'P_1 + \cdots + \alpha_m 'P_m,$$ where
\begin{small}
$$\alpha'_1=\max\left(\left\{ -ai+b\left(a+1-m+\textstyle\left\lfloor \frac{\alpha_1+ai-b(a+1-m)}{b}\right\rfloor \right) \ : \ {i=1,\ldots,b-1 \atop \mbox{with } \ N_i\neq 0}\right\}\bigcup \left\{b\left \lfloor \frac{\alpha_1}{b}\right\rfloor \ : \ N_0\neq 0\right\}\right)$$
and
$$\alpha'_j=\max\left\{i+b\textstyle\left\lfloor \frac{\alpha_j-i}{b}\right\rfloor \ : \ i=0,\ldots,b-1 \ \mbox{with } N_i\neq 0\right\}  \ \mbox{for} \  j=2,\ldots,m.$$
\end{small}
\end{proposition}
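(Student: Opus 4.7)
The plan is to apply Theorem \ref{lubfloor}, which reduces $\lfloor D_\negalpha\rfloor_{\negP_m}$ to $D_{\lub(\hGamma_{\negP_m}(\negalpha))}$, and then exploit the explicit parameterization of $\hGamma(\negP_m)$ obtained in the proof of Theorem \ref{dimension2}. After the change of variables $d_j=t_{j-1}-t_j$ for $2\leq j\leq m-1$ and $d_m=t_{m-1}$, every absolute maximal element lies in one of the two families
$$\bigl(-ai+b(a+1-m-\textstyle\sum_{j=2}^m d_j),\,i+bd_2,\ldots,i+bd_m\bigr),\quad 1\leq i<b,$$
$$\bigl(-b\textstyle\sum_{j=2}^m d_j,\,bd_2,\ldots,bd_m\bigr),$$
with $d_j\in\ZZ$.

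First I would translate $\negbeta\leq\negalpha$ into the explicit inequalities used in the proof of Theorem \ref{dimension2}: for fixed $i\in\{1,\ldots,b-1\}$ (resp.\ $i=0$ for the second family), admissibility of $(d_2,\ldots,d_m)$ amounts to $d_j\leq\lfloor(\alpha_j-i)/b\rfloor$ for $j=2,\ldots,m$ together with $\sum_{j=2}^m d_j\geq-\lfloor(\alpha_1+ai-b(a+1-m))/b\rfloor$ (resp.\ $\sum_{j=2}^m d_j\geq-\lfloor\alpha_1/b\rfloor$). By the very definition of $N_i$ (resp.\ $N_0$), this admissible set is nonempty if and only if $N_i\neq 0$.

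Since $\lub$ acts coordinate-wise, I would then maximize each entry independently. For $j\geq 2$ and any $i$ with $N_i\neq 0$, the value $i+bd_j$ is maximized at $d_j=\lfloor(\alpha_j-i)/b\rfloor$, and this choice is realized by simultaneously setting every $d_k$ to its individual upper bound, since the resulting sum $\sum_{k=2}^m\lfloor(\alpha_k-i)/b\rfloor$ meets the lower-bound constraint precisely because $N_i\neq 0$. Taking the maximum over $i\in\{0,\ldots,b-1\}$ with $N_i\neq 0$ reproduces the claimed formula for $\alpha_j'$. For the first coordinate, within each family the value is strictly decreasing in $\sum_{j=2}^m d_j$, hence maximized by pushing this sum down to its lower bound; that minimum is attainable by starting from the all-max tuple and decreasing a single $d_k$ by the difference $N_i-1$, which is permitted because the $d_k$'s are unbounded below. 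Substituting the minimal sum into the first coordinate yields $-ai+b(a+1-m+\lfloor(\alpha_1+ai-b(a+1-m))/b\rfloor)$ in the first family and $b\lfloor\alpha_1/b\rfloor$ in the second, and taking the maximum over admissible $i$ recovers the displayed formula for $\alpha_1'$.

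The only delicate point is the joint feasibility step: one has to ensure that each coordinate's individual supremum is in fact realized inside $\hGamma_{\negP_m}(\negalpha)$ by some admissible tuple, rather than only being an upper bound. This follows at once from the characterization of nonemptiness via $N_i\neq 0$ and the freedom to lower the remaining $d_k$'s without violating any constraint. Beyond this observation, the argument is a straightforward unwinding of the constraints already extracted in the proof of Theorem \ref{dimension2}, combined with Theorem \ref{lubfloor}.
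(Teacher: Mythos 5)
Your proof is correct and follows essentially the same route as the paper: reduce to $D_{\lub(\hGamma_{\negP_m}(\negalpha))}$ via Theorem \ref{lubfloor} and read off the coordinate-wise maxima from the explicit parameterization of $\hGamma_{\negP_m}(\negalpha)$ extracted in the proof of Theorem \ref{dimension2}. You are in fact more careful than the paper on the one nontrivial point, namely that each coordinate's supremum is actually attained by an admissible tuple (using $N_i\neq 0$ and the freedom to lower the remaining $d_k$'s), which the paper's proof leaves implicit.
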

\begin{proof}
It follows from the proof of Theorem \ref{dimension2} that each $\negbeta=(\beta_1,\ldots,\beta_m)\in \hGamma_{\mathbf{P}_m}(\negalpha)$ is expressed in one of the forms \eqref{eq:absmaxe} or \eqref{eq:absmaxe2}. Since Proposition \ref{floor} yields $\lfloor D_{\negalpha} \rfloor_{\mathbf{P}_m}=D_{\text{lub}(\hGamma_{\negP_m}(\negalpha))}$ and each coordinate of $\text{lub}(\hGamma_{\negP_m}(\negalpha))$ is the biggest one among the corresponding coordinates of elements $\negbeta\in \hGamma_{\mathbf{P}_m}(\negalpha)$, we have the desired formula.
\end{proof}

In particular, we get a formula for the floor of divisors with support in $\{P_1,\ldots,P_{a+1}\}$. Similarly, we let $\tilde{N}_i:=\max\left\{\left\lfloor\frac{\alpha_1+ia}{b}\right\rfloor+\sum_{j=2}^m \left\lfloor\frac{\alpha_j-i}{b} \right\rfloor+1,0\right\}$ for $i=0,\ldots,b-1$.

\begin{corollary}\label{floorX}
Let $A=\alpha_1P_1+\cdots +\alpha_{a+1}P_{a+1}\in \mbox{Div}(\cX_{f,g})$. Then, 
$$\lfloor A \rfloor=\alpha'_1P_1+\cdots +\alpha'_{a+1}P_{a+1},$$ where
$$\alpha'_1=\max\left\{-ai+b\textstyle\left\lfloor \frac{\alpha_1+ai}{b}\right\rfloor \ : \ i=0,1,\ldots,b-1 \ \mbox{with } \tilde{N}_i\neq 0\right\} \ \mbox{and}$$
$$\alpha'_j=\max\left\{ i+b\textstyle\left\lfloor \frac{\alpha_j-i}{b}\right\rfloor \ : \ i=0,1,\ldots,b-1 \ \mbox{with } \tilde{N}_i\neq 0\right\}  \ \mbox{for } \  j=2,\ldots,a+1.$$
\end{corollary}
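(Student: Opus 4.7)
The plan is to deduce Corollary~\ref{floorX} from Proposition~\ref{supfloorX} by specializing to $m=a+1$, after first using Proposition~\ref{mflooreqfloor} to replace the (classical) floor $\lfloor A\rfloor$ with the $\mathbf{P}_{a+1}$-floor $\lfloor A \rfloor_{\mathbf{P}_{a+1}}$. Since $A$ has support contained in $\{P_1,\ldots,P_{a+1}\}$, this pair of reductions should give the full formula directly, with no new ingredients beyond what has already been established.

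The first step is to check that the hypothesis of Proposition~\ref{mflooreqfloor} is satisfied for the tuple $\mathbf{P}_{a+1}$. By Theorem~\ref{maximals} combined with Theorem~\ref{result} (as unpacked in the proof of Theorem~\ref{dimension2}), every element of $\hGamma(\mathbf{P}_{a+1})$ is of type (I) or type (II). For each such element I would select a representative function as in the proof of Corollary~\ref{corbasis}, namely of the form $h^{b-i}/(g_2^{d_2+1}\cdots g_{a+1}^{d_{a+1}+1})$ or $1/(g_2^{d_2}\cdots g_{a+1}^{d_{a+1}})$. With $m=a+1$, the divisor of a type-(I) function computed in that proof loses the term $(b-i)\sum_{j=m+1}^{a+1}P_j$ (the sum is empty), and hence every function in our choice of $\rho^{-1}(\hGamma_{\mathbf{P}_{a+1}}(\negalpha))$ has divisor supported entirely in $\{P_1,\ldots,P_{a+1}\}$. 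In particular, none of these functions has a zero outside $\{P_1,\ldots,P_{a+1}\}$, so Proposition~\ref{mflooreqfloor} yields $\lfloor A \rfloor = \lfloor A \rfloor_{\mathbf{P}_{a+1}}$.

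The second step is to apply Proposition~\ref{supfloorX} with $m=a+1$, in which case $a+1-m=0$. Under this substitution the quantity $N_i$ becomes exactly $\tilde{N}_i$ for every $i\in\{1,\ldots,b-1\}$, and the quantity $N_0=\max\{\sum_{j=1}^{a+1}\lfloor\alpha_j/b\rfloor+1,0\}$ coincides with $\tilde{N}_0$. The first component then reads
\[
\alpha'_1=\max\Big(\{-ai+b\lfloor (\alpha_1+ai)/b\rfloor : 1\le i\le b-1,\ \tilde{N}_i\neq 0\}\cup\{b\lfloor\alpha_1/b\rfloor : \tilde{N}_0\neq 0\}\Big),
\]
and since $-a\cdot 0+b\lfloor\alpha_1/b\rfloor=b\lfloor\alpha_1/b\rfloor$, the two pieces merge into the single range $i=0,1,\ldots,b-1$ of Corollary~\ref{floorX}. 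The formula for $\alpha'_j$ with $j\ge 2$ is already stated in Proposition~\ref{supfloorX} over the range $i=0,\ldots,b-1$, and again only the $\tilde{N}_i$ condition replaces $N_i$. Combining these identifications produces exactly the stated formulas.

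The main (minor) obstacle I foresee is purely bookkeeping: one must carefully confirm that after setting $m=a+1$, the type-(I) indices $i=1,\ldots,b-1$ and the type-(II) index (which effectively plays the role of $i=0$) combine into a single uniform range $i=0,\ldots,b-1$, and that the non-vanishing condition $N_i\ne 0$ really matches $\tilde N_i\ne 0$ coordinate by coordinate. Once this matching is verified, no further calculation is needed; the result follows immediately from Propositions~\ref{mflooreqfloor} and~\ref{supfloorX}.
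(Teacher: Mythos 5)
Your proof is correct and follows essentially the same route as the paper: specialize Proposition~\ref{supfloorX} to $m=a+1$, and use the explicit functions from Corollary~\ref{corbasis} (which with $m=a+1$ have no zeroes outside $\{P_1,\ldots,P_{a+1}\}$) to invoke Proposition~\ref{mflooreqfloor} and identify $\lfloor A\rfloor$ with $\lfloor A\rfloor_{\mathbf{P}_{a+1}}$. Your bookkeeping of how $N_i$ becomes $\tilde N_i$ and how the $i=0$ term merges into the range is exactly the verification the paper leaves implicit.
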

\begin{proof}
Notice that, by the previous result, the formula is exactly the $\negP_m$-floor of $A$ for $m=a+1$. From Corollary \ref{corbasis}, each function in the spanning set of $\cL(A)$ has no zeroes outside the set $\{P_1,\ldots,P_{a+1}\}$. Therefore the result follows by Proposition \ref{mflooreqfloor}.
\end{proof}

\begin{example}
Consider again the curve $x^{28}=y^3+y$ over $\FF_{3^6}$ of Example~\ref{exemplo2} and let $w$ be a primitive element of $\FF_{3^6}$. Here we will see the difference between the supported floor and the floor of a divisor. For $m=3$, let $\negP_3=(P_\infty, P_{00}, P_{0w^{546}})$. As in Example~\ref{exemplo2}, the only $i\in \{0,\ldots,27\}$ such that $N_i\neq 0$ are $26$ and $27$, we have, according to Proposition \ref{supfloorX}, that the $\negP_3$-floor of the divisor $D_\negalpha=8P_\infty+7P_{00}-P_{0w^{546}}$, associated with
$\negalpha=(8,7,-1)$, is
$$\lfloor D_\negalpha\rfloor_{\negP_3}=6P_\infty-P_{00}-P_{0w^{546}},$$
where $6=\max\{6,3\}$, $-1=\max\{-2,-1\}$, and $-1=\max\{-2,-1\}$ in the formula. In a similar way, we have that the only $i\in \{0,\ldots,27\}$ such that $\tilde{N}_i\neq 0$ are $26$ and $27$, which from Corollary~\ref{floorX} gives us that the floor of $D_\negalpha$ is
$$\lfloor D_\negalpha\rfloor=6P_\infty-P_{00}-P_{0w^{546}}-P_{0w^{182}},$$
with $6=\max\{6,3\}$, $-1=\max\{-2,-1\}$, $-1=\max\{-2,-1\}$, and $-1=\max\{-2,-1\}$.
\end{example}

\section*{Acknowledgments}

The first author was supported by CAPES-Brazil. The second author was supported by CNPq-Brazil under grant 446913/2014-6 and by FAPEMIG-Brazil under grant APQ-01607-14.

\end{document}